\documentclass{article}
\usepackage{graphicx} 
\usepackage{amsthm, amsmath, amssymb, tikz, bm}
\usepackage{mathtools}
\usepackage{xifthen}
\usepackage{comment}
\usepackage[margin=4.5cm]{geometry}
\usepackage[shortlabels]{enumitem}
\usepackage{todonotes}
\usepackage[colorlinks=true,
linkcolor=blue,citecolor=blue,
urlcolor=blue]{hyperref}

\usetikzlibrary{calc,shapes, backgrounds}
\usetikzlibrary[patterns]

\newtheorem{theorem}{Theorem}[section]
\newtheorem{lemma}[theorem]{Lemma}
\newtheorem{sublemma}{}[theorem]
\newtheorem{corollary}[theorem]{Corollary}

\newtheorem{proposition}[theorem]{Proposition}

\newcommand{\romannum}[1]{\romannumeral#1\relax}

\newcommand{\co}{\operatorname{co}}
\newcommand{\si}{\operatorname{si}}
\newcommand{\cl}{\operatorname{cl}}
\newcommand{\W}{\mathcal{W}}
\makeatletter
\newcommand*{\rom}[1]{\expandafter\@slowromancap\romannumeral #1@}
\makeatother
\date{}
\title{Super-minimally $3$-connected matroids}
\author{
Wayne Ge \thanks{Louisiana State University, Baton Rouge, LA, 70803
({\tt yge4@lsu.edu}).}
\and
James Oxley \thanks{Louisiana State University, Baton Rouge, LA, 70803
({\tt oxley@math.lsu.edu}).}
}

\begin{document}

\maketitle
\begin{abstract}
A super-minimally $k$-connected matroid is a $k$-connected matroid having no proper $k$-connected restriction of size at least $2k-2$. This extends the corresponding concept for graphs. For $k=2$ and $k=3$, we determine the maximum size of a super-minimally $k$-connected rank-$r$ matroid and characterize, in each case, those matroids attaining the extremal bound. These results parallel Murty's results for minimally $2$-connected matroids and Oxley's results for minimally $3$-connected matroids. 
\end{abstract}

\section{Introduction}
The study of connectivity plays a central role in both graph theory and matroid theory, particularly in understanding the structure of objects that are minimally sufficient to maintain a given level of connectivity. Classical results of Dirac~\cite[(7)]{Dirac} and Halin~\cite[(7.6)]{Halin-density} established sharp bounds on the size of minimally $k$-connected graphs for $k\in\{2,3\}$, and these ideas were later extended to matroids by Murty~\cite[Theorems 3.2 and 3.4]{Murty} and Oxley~\cite[(4.7)]{Oxley_OMC}, respectively.

Recently, Ge~\cite{Ge_sm3c} introduced the class of {\it super-minimally $k$-connected graphs}, namely $k$-connected graphs that contain no proper $k$-connected subgraphs, and determined tight bounds on their density for $k\in\{2,3\}$. This raises the natural question as to whether analogous extremal phenomena occur for matroids under a similar strengthening of minimal connectivity.

Motivated by this question, we introduce the class of super-minimally $k$-connected matroids. Since a matroid with fewer than $2k-2$ elements cannot have a $(k-1)$-separation, we define a $k$-connected matroid $M$ to be {\it super-minimally $k$-connected} if it has no proper $k$-connected restriction with at least $2k-2$ elements. This notion captures matroids whose connectivity is highly fragile in the sense that deleting every subset of the ground set destroys $k$-connectivity. When $k=2$, we make the following elementary observation.

\begin{proposition}
    A matroid $M$ is super-minimally $2$-connected if and only if $M$ is isomorphic to $U_{1,1}$, or $U_{r,r+1}$ for some integer $r\geq 0$. In particular, $|E(M)|\leq r(M)+1$.
\end{proposition}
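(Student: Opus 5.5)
For $k=2$ the threshold $2k-2$ equals $2$. So $M$ is super-minimally $2$-connected exactly when $M$ is connected and no proper restriction $M|X$ with $|X|\ge 2$ is connected. The plan is to check the two listed families directly and then, for the converse, show that a connected matroid with at least two elements is a single circuit.

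For the easy direction, $U_{1,1}$ (a coloop) is connected and has no proper restriction with at least two elements. The matroid $U_{r,r+1}$ is a circuit of size $r+1$, so it is connected. A proper subset $X$ of its ground set is independent, so $M|X$ is free. A free matroid on at least two elements is disconnected, since every element is a coloop and so each element is a separator. Hence $U_{r,r+1}$ qualifies. The case $r=0$ gives $U_{0,1}$, a loop, which is connected and has no proper restriction of size at least $2$. This already accounts for every connected matroid with at most one element.

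For the converse, let $M$ be super-minimally $2$-connected with $|E(M)|\ge 2$. Since $M$ is connected and has at least two elements, it has no loops or coloops. In particular it has a circuit: take any two elements $e\neq f$; connectivity gives a circuit $C$ containing both, so $|C|\ge 2$. A circuit is connected as a restriction, since $M|C\cong U_{|C|-1,|C|}$. By super-minimality, $C$ cannot be a proper subset of $E(M)$, so $C=E(M)$. Thus $E(M)$ is itself a circuit, and $M\cong U_{r,r+1}$ with $r=|E(M)|-1\ge 1$.

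If $|E(M)|=1$, then $M$ is a loop $U_{0,1}$ or a coloop $U_{1,1}$. The bound $|E(M)|\le r(M)+1$ then holds in every case: equality for $U_{r,r+1}$, and $1\le 2$ for $U_{1,1}$.

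The only step that needs care is the existence of a circuit through two given elements in a connected matroid. This is the standard definition of connectivity (or equivalent to it), so there is no real obstacle. The remaining points are bookkeeping about the small cases and the convention that a one-element matroid is connected.
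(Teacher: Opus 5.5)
Your proof is correct. The paper gives no proof (it calls the proposition an elementary observation), and your argument is the routine verification it leaves implicit: when $|E(M)|\ge 2$, a circuit through two distinct elements is a connected restriction with at least two elements, so it must be all of $E(M)$; the one-element cases are $U_{0,1}$ and $U_{1,1}$. One point is left tacit by both you and the paper: the empty matroid, which is vacuously connected, is excluded by convention, so your claim to cover every connected matroid with at most one element really means every nonempty one.
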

    
Our main result is the following analogue of Oxley's theorem~\cite[(4.7)]{Oxley_OMC} on the density of minimally $3$-connected matroids. It extends Ge's bound~\cite[Theorem~1.10]{Ge_sm3c} for super-minimally $3$-connected graphs to matroids.

\begin{theorem}\label{thm:density_sm3c_matroids}
Let $M$ be a super-minimally $3$-connected matroid with at least four elements. Then
\[
|E(M)| \le 2r(M).
\]
Moreover, equality holds if and only if $M \cong U_{2,4}$, or $M \cong M(\W_k)$ or $\W^k$ for some $k \ge 3$.
\end{theorem}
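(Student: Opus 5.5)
The plan is to compare with Oxley's theorem on minimally $3$-connected matroids. That result says a minimally $3$-connected matroid $M$ with at least four elements satisfies $|E(M)|\le 2r(M)$, with equality exactly for $U_{2,4}$, $M(\W_k)$ and $\W^k$.

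First, a super-minimally $3$-connected matroid is minimally $3$-connected. Suppose $M\backslash e$ were $3$-connected for some $e$. If $|E(M)|\ge 5$, then $M\backslash e$ is a proper $3$-connected restriction with at least four elements, which is forbidden. If $|E(M)|=4$, then $M\cong U_{2,4}$ and there is nothing to prove, since $|E|=4=2r$. So for $|E(M)|\ge5$, $M$ is minimally $3$-connected, and Oxley's bound immediately gives $|E(M)|\le 2r(M)$.

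For the equality characterization, Oxley's theorem says equality forces $M\cong U_{2,4}$, $M(\W_k)$ or $\W^k$. So it remains to check that each of these is super-minimally $3$-connected.

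\begin{itemize}
\item $U_{2,4}$ has no proper restriction with at least four elements, so it qualifies trivially.
\item For $M(\W_k)$ we need that no proper restriction $M|X$ with $|X|\ge4$ is $3$-connected.
\end{itemize}

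For $M(\W_k)$ I will use that a $3$-connected restriction of a graphic matroid on at least four elements is the cycle matroid of a simple $3$-connected graph, after discarding isolated vertices. So we need a subgraph $H$ of $\W_k$ that is $3$-connected, loopless and simple.

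\begin{itemize}
\item Since every rim vertex has degree $3$, each rim vertex used by $H$ must keep all three of its edges.
\item Any rim vertex in $H$ therefore forces its two rim neighbours into $H$. Propagating around the rim, $H$ contains the whole rim and every spoke, so $H=\W_k$.
\item If $H$ uses no rim vertex, it is a star at the hub and is not $3$-connected.
\end{itemize}

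Thus no proper restriction is $3$-connected.

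For the whirl $\W^k$, a proper $3$-connected restriction $N$ must use some spoke or rim element. The restriction of $\W^k$ to any set other than the whole ground set is graphic: it coincides with the corresponding restriction of $M(\W_k)$ whenever the rim is not entirely present. When the rim is fully present but some spoke is missing, the restriction is still the same as in the wheel, since the only difference between $\W^k$ and $M(\W_k)$ is the rim circuit/basis status, and in both the rim restriction is a circuit-hyperplane relaxation. I will handle this by noting that relaxing a circuit-hyperplane $C$ affects only restrictions containing $C$. So for $X\ne E$, either $C\not\subseteq X$ and $\W^k|X=M(\W_k)|X$, or $X\supseteq C$ misses a spoke. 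In the latter case the rim elements adjacent to the missing spoke lie in a series pair of the restriction, so it is not $3$-connected for $|X|\ge4$.

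The delicate step is this last whirl case, together with making sure the $|E|=4$ boundary case and the reduction to Oxley's theorem are airtight. The graph-theoretic wheel argument is routine.
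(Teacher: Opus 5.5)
Your reduction to minimal $3$-connectivity is fine, but the theorem you then quote is false, and the rest of the proof depends on it. A minimally $3$-connected matroid need not satisfy $|E(M)|\le 2r(M)$. For $n\ge 3$, the cycle matroid $M(K_{3,n})$ is minimally $3$-connected, since deleting any edge leaves a vertex of degree two and hence a series pair. It has rank $n+2$ and $3n$ elements. So $M(K_{3,5})$ has $15>2\cdot 7$ elements, and $M(K_{3,4})$ has exactly $12=2\cdot 6$ elements without being a wheel or whirl. Oxley's bound is $2r(M)$ only for $r(M)\le 6$; in large rank it is $3r(M)-6$, attained by $M(K_{3,r-2})$. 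His wheel/whirl description of equality is Lemma~\ref{lemma:small rank wheel and whirl}, which needs $3\le r\le 5$.

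With the citation corrected, your argument gives only the bound for $r\le 6$ (the paper's Lemma~\ref{lem:rank at most six}) and the equality case for $r\le 5$. The real content of the theorem is that super-minimality excludes examples such as $M(K_{3,4})$ and $M(K_{3,5})$, both of which contain $M(K_{3,3})$ as a proper $3$-connected restriction. You use super-minimality only to deduce minimality, which discards exactly that information.

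What is missing is an argument in rank at least $6$ that uses the absence of proper $3$-connected restrictions. The paper takes a counterexample of minimum rank, which by the small-rank results has rank at least $6$.
\begin{itemize}
\item If $M$ is triangle-free, each $M\backslash e$ is a triangle-free brittle matroid. Such matroids satisfy $|E|\le 2r-2$ by induction over direct sums and $2$-sums, giving $|E(M)|\le 2r(M)-1$.
\item If $M$ has a triangle, a super-minimal analogue of Bixby's Lemma (Lemma~\ref{lem:sm3c_si_or_co}) together with Tutte's Triangle Lemma yields an element $x$ of the triangle for which $\co(M\backslash x)$ is again super-minimally $3$-connected. Counting and minimality of rank force $\co(M\backslash x)=M\backslash x/y$ to be a wheel or whirl. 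Corollary~\ref{cor:growing wheel and whirl} then lifts this back to $M$.
\end{itemize}

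Your check that wheels and whirls are super-minimally $3$-connected is correct, including the circuit-hyperplane relaxation argument for whirls, and it shows the bound is attained. It is not what the ``if'' direction needs, though: there $M$ is already assumed super-minimally $3$-connected, so equality is just a count.
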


In the final section, we derive results on triads in super-minimally $3$-connected matroids from Lemos's results on minimally $3$-connected matroids~\cite{Lemos2004,Lemos2007}.

\section{Preliminaries}
\subsection{Some connectivity results}
We assume familiarity with the basic notions of matroid theory as found, for example, in~\cite{Oxl11}. In particular, Chapter 8 of~\cite{Oxl11} gives a comprehensive treatment of matroid connectivity. Let $M$ be a matroid. We use $\lambda_M$ to denote the connectivity function of $M$ and will often abbreviate it as $\lambda$. We omit the elementary proof of the following.

\begin{proposition}\label{prop:closure coclosure}
    Suppose $\{X,Y\}$ is a $k$-separation of a matroid $M$ and $e$ is an element of $M$ in $\cl(X)\cup\cl^*(X)$. Then $\{X\cup e,Y-e\}$ is a $k$-separation of $M$ if and only if $|Y-e|\geq k$.
\end{proposition}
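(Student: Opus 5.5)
The plan is to compute directly with the connectivity function $\lambda(Z)=r(Z)+r(E-Z)-r(M)$. Recall that $\{X,Y\}$ is a $k$-separation exactly when $\lambda(X)<k$, $|X|\ge k$ and $|Y|\ge k$. Set $X'=X\cup e$ and $Y'=Y-e$; we may assume $e\in Y$, since otherwise there is nothing to prove. The ``only if'' direction holds by definition, because a $k$-separation requires both sides to have at least $k$ elements. For the ``if'' direction, assume $|Y-e|\ge k$. Then $|X\cup e|\ge|X|\ge k$ holds automatically, so the only thing left to check is that $\lambda(X\cup e)\le\lambda(X)<k$.

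\textbf{Case $e\in\cl(X)$.} Here $r(X\cup e)=r(X)$. Monotonicity of rank gives $r(Y-e)\le r(Y)$. Adding these, $\lambda(X\cup e)\le\lambda(X)$.

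\textbf{Case $e\in\cl^*(X)$.} We use the identity $\lambda_M=\lambda_{M^*}$, which follows from the formula $r^*(Z)=|Z|+r(E-Z)-r(M)$. Since $e\in\cl_{M^*}(X)$, the previous case applied to $M^*$ gives $\lambda_{M^*}(X\cup e)\le\lambda_{M^*}(X)$, and hence $\lambda_M(X\cup e)\le\lambda_M(X)$.

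Alternatively, the second case can be done directly. The condition $e\in\cl^*(X)$ means $e\notin\cl(Y-e)$, so $r(Y)=r(Y-e)+1$. Combined with $r(X\cup e)\le r(X)+1$, this again gives $\lambda(X\cup e)\le\lambda(X)$.

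No step here is a genuine obstacle. The only care needed is the bookkeeping that the size condition on $X\cup e$ is automatic, so that $|Y-e|\ge k$ is exactly the remaining requirement.
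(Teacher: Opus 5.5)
Your proof is correct. The paper omits this proof as elementary, and you supply exactly the standard argument it leaves to the reader: the reduction to $e\in Y$, the automatic bound $|X\cup e|\ge k$, and the check $\lambda(X\cup e)\le\lambda(X)$ in both cases, either via $\lambda_M=\lambda_{M^*}$ or directly from $e\notin\cl(Y-e)$ when $e\in\cl^*(X)$.
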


The following lemma, due to Tutte~\cite[(7.2)]{Tutte_matroids}, has been used extensively in the study of $3$-connected matroids and is often referred to as Tutte’s Triangle Lemma (see, for example,~\cite[Lemma 8.7.7]{Oxl11}).

\begin{lemma}
    Let $M$ be a $3$-connected matroid having at least four elements and suppose that $\{e,f,g\}$ is a triangle of $M$ such that neither $M\setminus e$ nor $M\setminus f$ is $3$-connected. Then $M$ has a triad that contains $e$ and exactly one of $f$ and $g$.
\end{lemma}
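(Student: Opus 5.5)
The statement to prove is Tutte's Triangle Lemma, and I would prove it by contradiction using the vertical $2$-separations produced by the two non-$3$-connected deletions. Since $\{e,f,g\}$ is a triangle in a $3$-connected matroid with at least four elements, $M$ is simple and cosimple. Because $e$ lies in a triangle, $M\setminus e$ has no coloops, so it is connected. As $M\setminus e$ is not $3$-connected, it has a $2$-separation $(X_e,Y_e)$. The elements $f$ and $g$ must lie on the same side of this separation. Otherwise $e\in\cl(f,g)$ would let us add $e$ to either side, by Proposition~\ref{prop:closure coclosure}, and obtain a $2$-separation of $M$ itself; this works once we check that the opposite side keeps at least two elements, which holds since $|X_e|,|Y_e|\ge 2$. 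So we may write the separation as $(X_e\cup\{f,g\}... )$; concretely, assume $f,g\in Y_e$. Then $|X_e|\ge 2$ and $e\notin\cl(X_e)$, since otherwise $(X_e\cup e,Y_e)$ would be a $2$-separation of $M$. Symmetrically, $M\setminus f$ has a $2$-separation $(X_f,Y_f)$ with $e,g\in Y_f$, $|X_f|\ge2$, and $f\notin\cl(X_f)$.

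The key step is to use submodularity of $\lambda$ on the crossing sets. Here $X_e\subseteq E-\{e,f,g\}$ and $X_f\subseteq E-\{e,f,g\}$. I would work in $M\setminus\{e,f\}$, or equivalently compute $r$ directly. First, $X_e$ and $X_f$ must be disjoint or their intersection must be small. Apply submodularity to $X_e\cup e$ viewed as exactly $3$-separating in $M$: since $r(X_e)+r(Y_e)=r(M\setminus e)+1=r(M)+1$ and $e\notin\cl(X_e)$, we get $\lambda_M(X_e)=2$ exactly. Similarly $\lambda_M(X_f)=2$. By submodularity,
\[
\lambda(X_e\cap X_f)+\lambda(X_e\cup X_f)\le 4 .
\]
Now $E-(X_e\cup X_f)\supseteq\{e,f,g\}$. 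If $\lambda(X_e\cup X_f)\le1$, then $3$-connectivity forces $|X_e\cup X_f|\le1$, which is impossible. Hence $\lambda(X_e\cup X_f)\ge 2$, and so $\lambda(X_e\cap X_f)\le 2$. I would refine this by showing that $X_e\cup X_f$ spans neither $e$ nor $f$ in a way that contradicts the triangle, or that a better partition can be chosen. The cleaner route is to choose the separations with $|X_e|$ minimal, then derive that $X_e\cap X_f$ is $2$-separating in a deletion, contradicting minimality unless $|X_e\cap X_f|\le1$.

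Next I would extract the triad. We have $\lambda_{M\setminus e}(X_e)=1$ and $e\notin\cl(X_e)$, so $e\in\cl^*_M(Y_e)$ fails. The dual statement is that $e\in\cl^*(X_e)$, meaning $e$ is in the coclosure of the small side. Consequently $X_e\cup e$ contains a cocircuit through $e$, and the same holds for $X_f\cup f$. The aim is to show that $|X_e|=2$ can be arranged, for example by the minimality argument combined with the crossing analysis; then $X_e\cup e$ is a triad containing $e$. That triad meets $\{f,g\}$ in at most one element. It must also meet the triangle in at least two elements, since circuits and cocircuits do not meet in exactly one element. Hence it contains $e$ and exactly one of $f,g$.

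The main obstacle is this reduction to $|X_e|=2$, or more directly to producing a triad from the crossing configuration. I would try first to analyse the four cells $X_e\cap X_f$, $X_e\cap Y_f$, $Y_e\cap X_f$ and $Y_e\cap Y_f$ using the uncrossing inequality above together with the fact that $g$ and the triangle lie in $Y_e\cap Y_f$. The goal is to show that $X_e\cap Y_f$ is a single element $h$ with $\{e,f,h\}$, or $\{e,g,h\}$, a triad. If this cell analysis stalls, I would fall back on the standard proof in \cite[Lemma 8.7.7]{Oxl11}, which carries out this case analysis explicitly.
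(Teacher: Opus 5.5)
The paper gives no proof of this lemma. It quotes it from Tutte and from \cite[Lemma~8.7.7]{Oxl11}. Judged on its own, your proposal has a genuine gap, and it starts from a false claim.

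\textbf{The set-up is reversed.} You assert that $f$ and $g$ lie on the same side of a $2$-separation $(X_e,Y_e)$ of $M\setminus e$. The truth is the opposite. If $f,g\in Y_e$, then $e\in\cl(\{f,g\})\subseteq\cl(Y_e)$, so $(X_e,Y_e\cup e)$ is a $2$-separation of $M$ by Proposition~\ref{prop:closure coclosure}. Hence $f$ and $g$ are always on opposite sides; for example, in $M(K_4)$ each $2$-separation of $M(K_4)\setminus e$ separates $f$ from $g$. Your configuration ``$f,g\in Y_e$'' is therefore impossible, and what you build on it is inconsistent:
\begin{itemize}
\item with $e\in\cl(Y_e)$ one gets $\lambda_M(X_e)=1$, not $2$;
\item the inference ``$e\notin\cl(X_e)$, so $e\in\cl^*_M(Y_e)$ fails'' is backwards, since $e\notin\cl(X_e)$ is equivalent to $e\in\cl^*(Y_e)$;
\item a triad $X_e\cup e$ would meet the triangle only in $e$, violating orthogonality.
\end{itemize}

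\textbf{The core step is missing.} Even with the corrected set-up $f\in X_e$, $g\in Y_e$, $e\in X_f$, $g\in Y_f$, you only record $\lambda(X_e\cap X_f)+\lambda(X_e\cup X_f)\le 4$. That inequality yields nothing by itself, and you then defer to the textbook for the step that actually produces a triad. What is needed is to uncross while exploiting the triangle. Put $A=X_e\cap X_f$, $B=X_e\cap Y_f$, $C=Y_e\cap X_f$ and $D=(Y_e\cap Y_f)-g$.
\begin{itemize}
\item Add the submodular rank inequalities for the pairs $X_e,Y_f$ and $Y_e,X_f$, and use $e\in\cl(\{f,g\})$ and $f\in\cl(\{e,g\})$. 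This gives $\lambda_M(B)+\lambda_M(C)\le 2$.
\item The pairs $X_e,X_f$ and $Y_e,Y_f$ similarly give $\lambda_M(A)+\lambda_M(D\cup g)\le 3$.
\item These force one of $X_e$, $Y_e$, $X_f$ to have exactly two elements. That side is a series pair of $M\setminus e$ or $M\setminus f$, hence a triad $\{e,f,h\}$ or $\{e,g,h\}$.
\item The exception is $|A|=|B|=|C|=1$ with $D\neq\emptyset$, which needs a further argument. If no triad through $e$ appears there, then $\{a,b,f\}$ and $\{a,c,e\}$ are triangles. Also $\{a,b,e,f\}$ and $\{a,c,e,f\}$ are cocircuits, and by elimination and orthogonality so is $\{b,c,e,f\}$. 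Then $g\cup D$ lies in two distinct hyperplanes, and $\lambda_M(D)$ is too small for $3$-connectivity.
\end{itemize}

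Note that the two-element side may come from the separation of $M\setminus f$. So ``arrange $|X_e|=2$'' for your chosen separation is not the right target.

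Finally, your account of why a triad through $e$ contains exactly one of $f,g$ needs repair. The correct reason is orthogonality together with the fact that $\{e,f,g\}$ is not a triad. Otherwise $\lambda(\{e,f,g\})=1$, which forces $M\cong U_{2,4}$, and there $M\setminus e$ is $3$-connected.
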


The next lemma is commonly referred to as Bixby’s Lemma~\cite{Bixby} (see also~\cite[Lemma~8.7.3]{Oxl11}). Let $M$ be a matroid. We use $\si(M)$ to denote the simple matroid associated with $M$. The {\it cosimplification}, $\co(M)$, of $M$ is $(\si(M^*))^*$.

\begin{lemma}
    Let $e$ be an element of a $3$-connected matroid $M$. Then $\si(M/e)$ is $3$-connected or $\co(M\setminus e)$ is $3$-connected.
\end{lemma}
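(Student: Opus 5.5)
I would argue by contradiction: assume that both $\si(M/e)$ and $\co(M\setminus e)$ fail to be $3$-connected, and derive a $2$-separation of $M$. First I would dispose of small cases. If $|E(M)|\le 3$, then $M/e$ and $M\setminus e$ are so small that their simplification or cosimplification is trivially $3$-connected. If $M\cong U_{2,4}$ or $M$ is otherwise tiny, a direct check suffices. So assume $|E(M)|\ge 4$; then $M$ is simple and cosimple.

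The first step translates the two failures into separations. Since $\co(M\setminus e)$ is not $3$-connected, $M\setminus e$ has a $2$-separation $(X_1,Y_1)$ in which neither side is contained in a single series class together with its coloops. Equivalently, it is a cyclic $2$-separation: $r^*_{M\setminus e}(X_1)\ge 2$ and $r^*_{M\setminus e}(Y_1)\ge 2$. Dually, since $\si(M/e)$ is not $3$-connected, $M/e$ has a vertical $2$-separation $(X_2,Y_2)$, meaning $r_{M/e}(X_2)\ge 2$ and $r_{M/e}(Y_2)\ge 2$. Because $M$ is $3$-connected, $e\notin\cl_M(X_1)\cup\cl_M(Y_1)$, since otherwise Proposition~\ref{prop:closure coclosure} would extend the separation to $M$. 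For the same reason $e\notin\cl^*_M(X_2)\cup\cl^*_M(Y_2)$. Hence $\lambda_M(X_1)=\lambda_M(X_1\cup e)=2$ and likewise for $X_2$.

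The key step is an uncrossing inequality mixing deletion and contraction. For disjoint-from-$e$ sets $A,B$ I would use
\[
\lambda_{M\setminus e}(A\cap B)+\lambda_{M/e}(A'\cap B')\le \lambda_{M\setminus e}(A)+\lambda_{M/e}(B)=2,
\]
where $A'=E-e-A$ and $B'=E-e-B$. This follows from submodularity of $r$ and $r^*$, using $r_{M/e}(Z)=r(Z\cup e)-1$ and $r(M\setminus e)=r(M)$. I would apply it to each of the four "opposite quadrant" pairs $(X_1\cap X_2,\,Y_1\cap Y_2)$, $(X_1\cap Y_2,\,Y_1\cap X_2)$, and their swaps. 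In each application one of the two terms is at most $1$. A quadrant $Q$ with $\lambda_{M\setminus e}(Q)\le1$ or $\lambda_{M/e}(Q)\le 1$ gives $\lambda_M(Q)\le 1$ or $\lambda_M(Q\cup e)\le 1$. Since $M$ is $3$-connected, this forces $Q$ (or its complement together with $e$) to have at most one element, in the appropriate sense.

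The main obstacle is the final counting step. One must show that the quadrants cannot all be this small, given the cyclic and vertical conditions $r^*(X_1),r^*(Y_1)\ge 2$ and $r_{M/e}(X_2),r_{M/e}(Y_2)\ge 2$. I would first try to show that opposite quadrants cannot both be small. If, say, $|X_1\cap X_2|\le 1$ and $|X_1\cap Y_2|\le 1$, then $|X_1|\le 2$. In that case $X_1$ is a series pair of $M\setminus e$ (as $r^*(X_1)\ge2$ and $\lambda=1$), so $X_1\cup e$ is a triad of $M$, which I would then show puts $e$ in $\cl^*$ of a side of $(X_2,Y_2)$. That contradicts the earlier observation. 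The symmetric cases are handled dually, with triangles in place of triads. Bookkeeping these few degenerate configurations is where care is needed.
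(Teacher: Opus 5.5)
The paper does not prove this lemma. It quotes it from Bixby (Lemma 8.7.3 of Oxley's book), and the standard proof is the uncrossing strategy you outline. However, the two steps that carry your argument are false as you state them.

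First, your inequality fails in general. With $A=E-e$ it reads $\lambda_{M\setminus e}(B)\le\lambda_{M/e}(B)$ for all $B\subseteq E-e$. But in $M(K_4)$, with a triangle $\{e,f,g\}$ and $B=\{f,g\}$, one has $\lambda_{M\setminus e}(B)=2$ and $\lambda_{M/e}(B)=1$. If you carry out the submodularity computation, the step applied to $r(E-e-A)$ and $r((E-e-B)\cup e)$ produces $r(E-(A\cap B))+r(E-e-(A\cup B))$. So $e$ lands with the complement of $A\cap B$, not with $E-e-(A\cup B)$. What actually comes out is Bixby's inequality
\[
\lambda_{M\setminus e}(A)+\lambda_{M/e}(B)\ \ge\ \lambda_M(A\cap B)+\lambda_M(A\cup B\cup e)-1 .
\]

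Second, your transfer claim is also false: $\lambda_{M\setminus e}(Q)\le 1$ does not force $\lambda_M(Q)\le 1$ or $\lambda_M(Q\cup e)\le 1$. Your own earlier observation refutes it, since $\lambda_{M\setminus e}(X_1)=1$ while $\lambda_M(X_1)=\lambda_M(X_1\cup e)=2$.

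The two errors happen to cancel. The repair is to use Bixby's inequality directly. Applied to $(X_1,X_2)$ and $(X_1,Y_2)$, it gives $\lambda_M(X_1\cap X_2)+\lambda_M(Y_1\cap Y_2)\le 3$ and $\lambda_M(X_1\cap Y_2)+\lambda_M(Y_1\cap X_2)\le 3$. So in each pair of opposite quadrants, one quadrant $Q$ has $\lambda_M(Q)\le 1$. Its complement contains $e$ and a whole side, so $3$-connectivity gives $|Q|\le 1$.

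The step you flag as the main obstacle is then immediate, and your handling of it is wrong. Whichever quadrants are small, two of them share a side, so one of $X_1,Y_1,X_2,Y_2$ has at most two elements. But all four sides have at least three elements:
\begin{itemize}
\item A $2$-element side $Z$ of your cyclic $2$-separation of $M\setminus e$ has $r^*_{M\setminus e}(Z)=2$. Since $\lambda_{M\setminus e}(Z)=r(Z)+r^*_{M\setminus e}(Z)-|Z|$, this gives $r(Z)=\lambda_{M\setminus e}(Z)\le 1$, contradicting the simplicity of $M$.
\item Dually, a $2$-element side of your vertical $2$-separation of $M/e$ would have rank at most $1$ in $M^*$, contradicting the cosimplicity of $M$.
\end{itemize}
In particular, your claim that such an $X_1$ is a series pair of $M\setminus e$ is backwards. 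A series pair has corank $1$, which is exactly what the cyclic condition excludes. The detour through the triad $X_1\cup e$ is therefore neither valid nor needed. With these corrections, your outline becomes the standard proof.
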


One of the main tools in the study of $3$-connected matroids is Tutte's Wheels-and-Whirls Theorem. When $M$ is a $3$-connected matroid, an element $e$ is {\it essential} if neither $M\backslash e$ nor $M/e$ is $3$-connected. The next result follows immediately from an extension of Tutte's Wheels-and-Whirls Theorem that is due to Oxley and Wu~\cite[Theorem 1.6]{Oxley_Wu} (see also~\cite[Lemma 8.8.6]{Oxl11}).

\begin{theorem}\label{thm:two non-essential}
    Suppose $M$ is a $3$-connected matroid with at least four elements that is not isomorphic to a whirl or to the cycle matroid of a wheel. Then $M$ has at least two nonessential elements.
\end{theorem}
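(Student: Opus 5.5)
The plan is to obtain this as a direct specialization of the Oxley--Wu extension of Tutte's Wheels-and-Whirls Theorem. We do not reprove it from first principles. Oxley and Wu~\cite[Theorem 1.6]{Oxley_Wu} show that a $3$-connected matroid $M$ with at least four elements that is neither a wheel nor a whirl has at least two nonessential elements. They also describe the structure when there are exactly two, namely the ends of a maximal fan. So the proof consists of checking that their hypotheses and the notion of ``essential'' match the ones used here, and then reading off the lower bound of two and discarding the structural description.

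The first check is that ``essential'' means the same thing in both places: neither $M\backslash e$ nor $M/e$ is $3$-connected. The second is that the hypothesis ``at least four elements'' excludes the small degenerate cases. These are $U_{1,2}$, $U_{1,3}$ and $U_{2,3}$, in which deletions or contractions fall below the size at which $3$-connectivity is vacuous. Since $U_{2,4}$ is the rank-$2$ whirl $\W^2$, it is already excluded by the hypothesis, which is consistent with Oxley and Wu's statement.

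If a more self-contained argument were wanted, I would proceed as follows. Start from Tutte's theorem, which gives one nonessential element. Then use the fact, obtained via Tutte's Triangle Lemma together with Bixby's Lemma, that every essential element of such an $M$ lies in a triangle or a triad. Suppose, for a contradiction, that at most one element is nonessential. Then almost every element lies in a triangle or a triad. Chaining these triangles and triads, using the Triangle Lemma to pass from a triangle to an adjacent triad, produces a maximal fan $F$ that contains all but at most one element.

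The main obstacle is the fan-end analysis. One must show that if $F$ is a maximal fan with $|F|\ge 4$ and $F\neq E(M)$, then both ends of $F$ are nonessential. For an end that lies in a triangle, this means showing that deleting it preserves $3$-connectivity unless the fan extends further; dually, for an end in a triad, contracting it preserves $3$-connectivity. This yields two distinct nonessential elements. If instead $F=E(M)$ and the fan closes up cyclically, then $M$ is a wheel or a whirl, which is excluded. This is precisely the content of Oxley and Wu's work, so in the paper I would simply cite it.
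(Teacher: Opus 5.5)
Your proposal matches the paper, which gives no independent argument and obtains the statement directly from Oxley and Wu's extension of Tutte's Wheels-and-Whirls Theorem (their Theorem 1.6; see also Lemma 8.8.6 of Oxley's book), as you do by dropping the description of the case with exactly two nonessential elements. Your optional self-contained sketch is not needed, and its claim that the triangles and triads chain into a single fan missing at most one element would take real work to justify, but the cited argument stands on its own.
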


The following lemmas will be useful in our arguments.

\begin{lemma}~\cite[(2.6)]{Oxley_OMC}\label{lemma:oxley contraction}
    Suppose that $x$ and $y$ are distinct elements of an $n$-connected matroid $M$ where $n\geq 2$ and $|E(M)|\geq 2(n-1)$. Assume that $M\setminus x/y$ is $n$-connected but that $M\setminus x$ is not $n$-connected. Then $M$ has a cocircuit of size $n$ containing $x$ and $y$.
\end{lemma}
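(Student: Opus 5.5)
The plan is to start from a low-order separation of $M\setminus x$ and show that contracting $y$ can only remove it if the small side is tiny and contains $y$. Putting $x$ back into that small side should then produce the required cocircuit. Throughout I use the facts that $\lambda$ does not increase under deletion or contraction, and that a matroid $N$ is $n$-connected exactly when $\lambda_N(A)\ge \min\{|A|,|E(N)-A|,n-1\}$ for every $A\subseteq E(N)$.

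\textbf{Step 1: the separation of $M\setminus x$.} Since $M\setminus x$ is not $n$-connected, it has an exact $k$-separation $\{X,Y\}$ for some $k\le n-1$. Then $|X|,|Y|\ge k$ and $\lambda_{M\setminus x}(Y)=k-1$. Label the sides so that $y\in Y$.

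\textbf{Step 2: the small side is $Y$ and $|Y|=k$.} In $M\setminus x/y$ we have
\[
\lambda_{M\setminus x/y}(Y-y)\le \lambda_{M\setminus x}(Y)=k-1<n-1 .
\]
Because $M\setminus x/y$ is $n$-connected, this forces $\min\{|X|,|Y-y|\}\le k-1$. Since $|X|\ge k$, we get $|Y-y|\le k-1$, and hence $|Y|=k$.

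\textbf{Step 3: pass to $M$ and bound $\lambda_M(Y)$.} Deleting one element lowers $\lambda$ by at most one, so $\lambda_M(Y)\le \lambda_{M\setminus x}(Y)+1=k$. On the other side, $M$ is $n$-connected. We have $|Y|=k$ and $|E(M)-Y|=|X|+1\ge k+1$, so $\lambda_M(Y)\ge \min\{k,n-1\}=k$. Hence $\lambda_M(Y)=\lambda_{M\setminus x}(Y)+1$.

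\textbf{Step 4: extract the cocircuit.} Write $\lambda_M(Y)=r(Y)+r(X\cup x)-r(M)$ and $\lambda_{M\setminus x}(Y)=r(Y)+r(X)-r(M\setminus x)$. The jump by one in Step 3 can only occur if $r(X\cup x)=r(X)+1$ and $r(M\setminus x)=r(M)$. In particular $x\notin \cl_M(X)$. So the complement of $X$ in $M$, namely $Y\cup x$, contains a cocircuit $C^*$ with $x\in C^*$, and $|C^*|\le k+1\le n$.

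\textbf{Step 5: the cocircuit has size exactly $n$ and contains $y$.} Suppose instead $|C^*|\le n-1$. Then $\lambda_M(C^*)\le r^*(C^*)\le |C^*|-1$. Since $|E(M)|\ge 2(n-1)$, we have $|E(M)-C^*|\ge n-1\ge |C^*|$. This contradicts $\lambda_M(C^*)\ge \min\{|C^*|,n-1\}=|C^*|$. Therefore $|C^*|\ge n$. Combined with $|C^*|\le k+1\le n$, this gives $k=n-1$ and $C^*=Y\cup x$, a cocircuit of size $n$ containing both $x$ and $y$.

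I expect Step 4 to be the only delicate point. It requires correctly identifying that equality in the deletion inequality for $\lambda$ means $x$ lies outside $\cl_M(X)$ (dually, $x$ is not a coloop of $M$). I would verify this directly from the rank expressions above rather than quoting Proposition~\ref{prop:closure coclosure}.
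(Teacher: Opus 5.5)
The paper does not prove this lemma; it quotes it from Oxley~\cite{Oxley_OMC}. So there is no in-paper argument to compare against, and your proof stands on its own as a correct, self-contained argument.

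Each step checks out:
\begin{enumerate}
\item[(1)] Contracting $y$ cannot increase $\lambda$. Together with the $n$-connectivity of $M\setminus x/y$, this forces the side $Y\ni y$ of the exact $k$-separation to have exactly $k$ elements.
\item[(2)] The $n$-connectivity of $M$ then forces $\lambda_M(Y)=\lambda_{M\setminus x}(Y)+1$. Your rank expressions correctly turn this jump into $x\notin\cl_M(X)$.
\item[(3)] That gives a cocircuit $C^*$ with $x\in C^*\subseteq Y\cup x$.
\item[(4)] The hypothesis $|E(M)|\ge 2(n-1)$ enters only in Step 5. There you reprove the standard fact that an $n$-connected matroid with at least $2(n-1)$ elements has no cocircuit with fewer than $n$ elements. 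This pins down $k=n-1$ and $C^*=Y\cup x$, which contains $y$.
\end{enumerate}
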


\begin{lemma}~\cite[(5.2)]{Oxley_OMC}\label{lemma:small rank wheel and whirl}
    Let $M$ be a rank-$r$ minimally $3$-connected matroid with precisely $2r$ elements. If $3\leq r\leq 5$, then $M$ is isomorphic to either $M(\W_r)$ or $\W^r$.
\end{lemma}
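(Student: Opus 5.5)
Since this is \cite[(5.2)]{Oxley_OMC}, I would reprove it by a direct structural argument. The plan is to show that $M$ has enough triads to force a \emph{fan} covering all of $E(M)$, and then to recognise a $3$-connected matroid with such a fan as a wheel or a whirl. I would work with $M$ and $M^*$ together. Since $|E(M)|=2r$, we have $r(M^*)=r$, so triads of $M$ are triangles of the rank-$r$ matroid $M^*$, which also has $2r$ elements.

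First, I would collect the known facts about a minimally $3$-connected matroid $M$ with at least four elements.
\begin{enumerate}[(i)]
\item Every circuit of $M$ meets a triad. This follows from the fact that $M\setminus x$ is not $3$-connected for every $x$, combined with Lemma~\ref{lemma:oxley contraction} and Bixby's Lemma.
\item Oxley's counting bound: $M$ has at least $\tfrac12 r(M)+1$ triads. More precisely, the union $E_3$ of its triads has $|E(M)-E_3|\le r(M)-1$ (so $|E_3|\ge r(M)+1$), with the complement $E(M)-E_3$ independent.
\end{enumerate}
Since $|E(M)|=2r$, the set $E(M)-E_3$ is independent of size at most $r-1$. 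Hence $|E_3|\ge r+1$, so for $r\in\{3,4,5\}$ at least $4$, $5$, or $6$ elements lie in triads.

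Next, I would show that the triads chain into a fan. Take a triad $T=\{a,b,c\}$. Since $M\setminus a$ is not $3$-connected, Bixby's Lemma gives that $\si(M/a)$ is $3$-connected, so $a$ lies in a triangle or $M/a$ is already $3$-connected. Applying Tutte's Triangle Lemma to triangles meeting $T$ (dually, to $T$ in $M^*$) produces alternating triangles and triads. This yields a maximal fan $F=(e_1,\dots,e_m)$ whose consecutive $3$-sets alternate between triangles and triads. I then want to show that $F$ is all of $E(M)$ and is closed up cyclically, because a $3$-connected matroid whose ground set is a cyclic alternating fan of length $2r$ is $M(\W_r)$ or $\W^r$.

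The case analysis, which is where $3\le r\le 5$ is used, goes as follows. If $F\ne E(M)$, the ends of $F$ give elements $e_1,e_m$, and the set $X=E(M)-F$ is small: it has at most $2r-4\le 6$ elements. Then $\lambda(F)$ is computed from $r(F)\le \lceil m/2\rceil+1$ and $r^*(F)\le\lfloor m/2\rfloor+1$, which gives $\lambda(F)\le 2$. I would exploit this to show that an end element $e_1$ is deletable, meaning $M\setminus e_1$ stays $3$-connected, contradicting minimality. The alternative is that $X$ is a short set that must itself extend the fan, contradicting maximality. For $r\le 5$ there are only a handful of configurations for $(m,|X|)$, and each can be checked directly by rank counting and Proposition~\ref{prop:closure coclosure}.

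I expect this step to be the main obstacle: ruling out a non-spanning fan, or a $3$-connected piece outside the fan (for instance a copy of $U_{2,4}$ or $F_7$-like configurations attached along a $2$-separation), using only the small rank. What I would try first is to pick a triad $T$ disjoint from $F$, when one exists, and use the count $|E_3|\ge r+1$ to get a contradiction with $r\le 5$. Once $F=E(M)$, closing the fan cyclically and identifying $M$ as $M(\W_r)$ or $\W^r$ is standard.
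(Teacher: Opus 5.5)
Your proposal has a genuine gap exactly where the content of the lemma lies: no valid step in it produces a triangle, let alone a spanning fan.

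\textbf{The fan-building step does not start.} First, Bixby's Lemma is misapplied. It says that $\si(M/a)$ or $\co(M\backslash a)$ is $3$-connected, so knowing that $M\backslash a$ is not $3$-connected gives nothing. Moreover, since $a$ lies in the triad $T$, $M\backslash a$ has a series pair anyway. That is precisely the situation in which $\co(M\backslash a)$ can be the $3$-connected one. Second, in a minimally $3$-connected matroid, Tutte's Triangle Lemma only lets you pass from a triangle to a triad, because minimality controls deletions. Applying it to $T$ as a triangle of $M^*$ would require $M/a$ and $M/b$ to fail to be $3$-connected, and minimality says nothing about contractions. So the alternating chain of triangles and triads never gets started. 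This cannot be a general property of minimally $3$-connected matroids. $M(K_{3,4})$ is minimally $3$-connected and triangle-free, yet it has triads, rank $6$, and exactly $12=2r$ elements. Hence the hypothesis $r\le 5$ must be used precisely to force a triangle, indeed a fan with at least four elements, and your outline never says how.

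\textbf{The remaining step is deferred rather than argued.} Even granting a fan $F$, there are three problems.
\begin{enumerate}[(a)]
\item The bound $|E(M)-F|\le 2r-4$ presupposes $|F|\ge 4$.
\item The inequality $\lambda(F)\le 2$ holds for every fan in every $3$-connected matroid, so it gives no information about whether $M\backslash e_1$ is $3$-connected.
\item The ``handful of configurations'' for $(m,|X|)$ is not a routine check. It includes the cases where $E(M)-F$ carries its own structure across a $3$-separation, and handling these is the substance of the lemma. You flag it as the main obstacle but do not resolve it.
\end{enumerate}

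A smaller point concerns (ii). The triad count there is not needed. Stated in terms of $r(M)$, it would also contradict the sharpness of Lemos's bound $\frac{r(M)+6}{4}$ quoted in the last section of the paper. What you actually use, $|E(M)-E_3|\le r-1$, does follow from (i): $E(M)-E_3$ is independent and lies in the hyperplane complementary to any triad.

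The paper itself does not prove this lemma; it imports it from Oxley. A self-contained proof must supply an argument that uses $|E(M)|=2r$ and $r\le 5$ to force triangles and then pin down the wheel or whirl structure. Your proposal does not yet contain such an argument.
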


\subsection{Small $3$-connected matroids}
Recall that a $3$-connected matroid $M$ is super-minimally $3$-connected if $M$ does not have a proper $3$-connected restriction of size at least four. Therefore, if the size of $M$ is at most four, then $M$ is super-minimally $3$-connected if and only if $M$ is $3$-connected. Table~\ref{table:all sm3c matroids} lists all $3$-connected matroids on at most four elements.

\begin{center}
\begin{table}[h!]
\centering
\begin{tabular}{|c|l|}
\hline
\textbf{Number $n$ of elements} & \textbf{$3$-connected $n$-element matroids} \\ \hline
1                               & $U_{0,1}$, $U_{1,1}$                        \\ \hline
2                               & $U_{1,2}$                                   \\ \hline
3                               & $U_{1,3}$, $U_{2,3}$                        \\ \hline
4                               & $U_{2,4}$                                   \\ \hline
\end{tabular}
\caption{All $3$-connected matroids on at most four elements.}
\label{table:all sm3c matroids}
\end{table}
\end{center}

Although every super-minimally $3$-connected graph is minimally $3$-connected~\cite[Lemma~1.1]{Ge_sm3c}, a super-minimally $3$-connected matroid is not necessarily minimally $3$-connected. For instance, the uniform matroid $U_{1,3}$ is super-minimally $3$-connected but not minimally $3$-connected, since each single-element deletion of it is isomorphic to $U_{1,2}$, which is $3$-connected. The next proposition shows that this counterintuitive phenomenon occurs only when $|E(M)|$ is small. We omit its elementary proof.

\begin{proposition}\label{prop:small smkc}
    Let $k\geq 2$ be an integer and let $M$ be a matroid.
    \begin{enumerate}
        \item[(\romannum{1})] If $|E(M)|\leq 2k-2$, then $M$ is super-minimally $k$-connected if and only if $M$ is $k$-connected.
        \item[(\romannum{2})] If $M$ is super-minimally $k$-connected and $|E(M)|>2k-2$, then $M$ is minimally $k$-connected.
    \end{enumerate}
\end{proposition}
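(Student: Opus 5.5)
Both parts follow directly from the definitions. Throughout, super-minimally $k$-connected means: $M$ is $k$-connected and has no proper $k$-connected restriction $M|X$ with $|X|\geq 2k-2$.

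For (\romannum{1}), suppose $|E(M)|\leq 2k-2$. One direction is built into the definition: a super-minimally $k$-connected matroid is $k$-connected. For the converse, let $M$ be $k$-connected and let $X$ be a proper subset of $E(M)$. Then $|X|<|E(M)|\leq 2k-2$. So $M$ has no proper restriction with at least $2k-2$ elements, and the extra condition in the definition holds vacuously. Hence $M$ is super-minimally $k$-connected. This uses neither the fact that a matroid with fewer than $2k-2$ elements has no $(k-1)$-separation, nor any earlier lemma.

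For (\romannum{2}), suppose $M$ is super-minimally $k$-connected and $|E(M)|>2k-2$, so $|E(M)|\geq 2k-1$. Recall that $M$ is minimally $k$-connected if $M$ is $k$-connected and $M\setminus e$ fails to be $k$-connected for every $e\in E(M)$. Take any $e\in E(M)$. The deletion $M\setminus e$ is the restriction $M|(E(M)-e)$. It is proper, and
\[
|E(M)-e|=|E(M)|-1\geq 2k-2.
\]
By super-minimality, $M\setminus e$ is therefore not $k$-connected. Since $M$ itself is $k$-connected, $M$ is minimally $k$-connected.

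There is no real obstacle. The only point needing care is the exact threshold: the bound $|E(M)|>2k-2$ is precisely what guarantees that every single-element deletion still has at least $2k-2$ elements and so falls under the super-minimality condition. The example $U_{1,3}$ with $k=3$ shows this threshold is sharp, since there $|E(M)|=3<4=2k-2$ and each single-element deletion $U_{1,2}$ is $3$-connected.
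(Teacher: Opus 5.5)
Your proof is correct and is exactly the elementary definitional check the paper omits: in (i) the super-minimality condition is vacuous because every proper restriction has fewer than $2k-2$ elements, and in (ii) each single-element deletion is a proper restriction with at least $2k-2$ elements. One small quibble with your closing remark: $U_{1,3}$ only shows that the hypothesis in (ii) cannot be dropped, whereas $U_{2,4}$ (with $k=3$) shows the threshold $|E(M)|>2k-2$ is exactly sharp, since it has $2k-2$ elements and $U_{2,4}\backslash e\cong U_{2,3}$ is still $3$-connected.
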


While Table~\ref{table:all sm3c matroids} lists all super-minimally $3$-connected matroids with at most four elements, in the remainder of the paper, we will focus primarily on those super-minimally $3$-connected matroids with at least five elements. The following is an immediate consequence of Proposition~\ref{prop:small smkc} and a theorem of Oxley~\cite[(4.7)]{Oxley_OMC}.

\begin{lemma}\label{lem:rank at most six}
    If $M$ is a super-minimally $3$-connected matroid with at least four elements and $r(M)\leq 6$, then
    \[|E(M)|\leq 2r(M).\]
\end{lemma}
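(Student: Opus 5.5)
We split according to whether $|E(M)|$ exceeds four. If $|E(M)|=4$, Table~\ref{table:all sm3c matroids} shows that $M\cong U_{2,4}$. Then $r(M)=2$, so $|E(M)|=4=2r(M)$ and the bound holds.

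Now suppose $|E(M)|\geq 5$. Since $2k-2=4$ when $k=3$, we have $|E(M)|>2k-2$, so Proposition~\ref{prop:small smkc}(ii) applies and $M$ is minimally $3$-connected.

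We then invoke Oxley's theorem~\cite[(4.7)]{Oxley_OMC}. It states that a minimally $3$-connected matroid of rank $r\geq 3$ has at most $2r$ elements, and that for larger $r$ it has at most $3r-6$ elements. Which of these two bounds governs depends on the exact form of the statement, but the smaller-rank range is the one that matters here, because the lemma only concerns $r(M)\leq 6$.

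It remains to confirm that $r(M)\geq 3$ whenever $|E(M)|\geq 5$. A $3$-connected matroid of rank at most $2$ with at least four elements is isomorphic to $U_{2,n}$, since it must be simple and cosimple. For $n\geq 5$, the restriction of $U_{2,n}$ to any four elements is $U_{2,4}$, which is a proper $3$-connected restriction with four elements. This contradicts super-minimality, so $r(M)\geq 3$, and Oxley's bound then gives $|E(M)|\leq 2r(M)$.

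The step most likely to need care is matching the precise form of Oxley's bound to the range $3\leq r\leq 6$. Presumably (4.7) gives $|E(M)|\leq 2r(M)$ for $r\leq 6$ and $|E(M)|\leq 3r(M)-6$ otherwise; note that $3r-6\leq 2r$ exactly when $r\leq 6$. If instead the theorem is stated purely as $\max\{2r,3r-6\}$, or only as $3r-6$ for $r\geq 3$, we must check directly that the bound evaluates to at most $2r$ for each $r\in\{3,4,5,6\}$, which is a routine finite check.
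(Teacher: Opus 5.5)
Your argument is correct and is essentially the paper's proof. The paper gives it as a one-line observation: Proposition~\ref{prop:small smkc}(ii) makes $M$ minimally $3$-connected once $|E(M)|\geq 5$, and then Oxley's bound~\cite[(4.7)]{Oxley_OMC} ($|E(M)|\leq 2r(M)$ for $3\leq r(M)\leq 6$) applies; your separate handling of $U_{2,4}$ and your exclusion of $U_{2,n}$ with $n\geq 5$ fill in details the paper leaves implicit, and your hedge about the form of (4.7) is harmless since every version you list gives at most $2r$ when $r\leq 6$.
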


\section{Structural Lemmas}

In this section, we prove some structural lemmas for super-minimally $3$-connected matroids. The next lemma provides a characterization analogous to Bixby’s Lemma for $3$-connected matroids.

\begin{lemma}\label{lem:sm3c_si_or_co}
    Suppose $M$ is a super-minimally $3$-connected matroid with at least five elements. If $e\in E(M)$, then either $\si(M/e)$ is $3$-connected, or $\co(M\backslash e)$ is super-minimally $3$-connected.
\end{lemma}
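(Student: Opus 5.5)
The plan is to reduce to Bixby's Lemma and then lift any offending restriction of $\co(M\backslash e)$ back to a proper $3$-connected restriction of $M$. Since $|E(M)|\geq 5$, Proposition~\ref{prop:small smkc}(ii) shows that $M$ is minimally $3$-connected, so $M\backslash e$ is not $3$-connected. Suppose $\si(M/e)$ is not $3$-connected. Then Bixby's Lemma gives that $N=\co(M\backslash e)$ is $3$-connected. It remains to show that $N$ has no proper $3$-connected restriction with at least four elements. Suppose, for a contradiction, that $N|Z$ is such a restriction, with $Z\subsetneq E(N)$ and $|Z|\geq 4$.

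\textbf{Series classes of $M\backslash e$.} First I would describe these classes. Every $2$-cocircuit of $M\backslash e$ is $C^*-e$ for a triad $C^*$ of $M$ containing $e$. If a series class had three elements $x,y,z$, then $\{e,x,y\}$ and $\{e,x,z\}$ would be triads of $M$. Cocircuit elimination would then give a cocircuit inside $\{x,y,z\}$, which must be a triad since $M$ is $3$-connected. This forces $\{e,x,y,z\}$ to have corank $2$ in a way that, together with $|E(M)|\geq 5$ and $3$-connectivity, I expect to rule out (possibly by a direct small-case check). So every nontrivial series class is a pair $\{x_i,y_i\}$ with $\{e,x_i,y_i\}$ a triad of $M$. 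Write $N=M\backslash e/\{y_1,\dots,y_m\}$, taking $x_i$ as the representative of its class.

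\textbf{Lifting $Z$.} Let $I$ be the set of indices $i$ with $x_i\in Z$, and set $Z'=Z\cup\{y_i : i\in I\}$. Since deletion and contraction commute, the matroid $M|Z'$ is obtained from $N|Z$ by adding each $y_i$ ($i\in I$) in series with $x_i$. If $I=\emptyset$, then $M|Z'=N|Z$ is a $3$-connected proper restriction of $M$ with at least four elements, a contradiction. Otherwise I would consider $M''=M|(Z'\cup e)$. This is a proper restriction, because $Z\neq E(N)$, so some element of $E(N)-Z$ (together with its partner, if it has one) is missing. In $M''$, each set $\{e,x_i,y_i\}$ with $i\in I$ still contains a cocircuit, which will be shown to be a triad. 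The goal is to prove that $M''$ is $3$-connected, again contradicting super-minimality.

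\textbf{Main obstacle.} The crux is proving that $M''$ is $3$-connected. Let $(X,Y)$ be a $2$-separation of $M''$. I would use Proposition~\ref{prop:closure coclosure} repeatedly. Since $e\in\cl^*(\{x_i,y_i\})$ and $y_i\in\cl^*(\{e,x_i\})$, these elements can be moved so that $e$ and each pair $\{x_i,y_i\}$ lie on one side, provided both sides keep at least two elements. Deleting $e$ and contracting the $y_i$ then yields a $2$-separation of $N|Z$, contradicting its $3$-connectivity. The only escape is when a side becomes too small. This should be handled using $|Z|\geq 4$ and the fact that $e$ is not a coloop of $M''$ once some $i\in I$ exists. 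Simplicity and cosimplicity of $M''$ also need separate checks: a parallel pair or series pair in $M''$ would come from a small configuration that contradicts either the $3$-connectivity of $M$ or the triad structure. I expect these small-side and degenerate cases to be the fiddliest part of the argument.
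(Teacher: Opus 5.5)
\textbf{There is a genuine gap.} Your overall architecture matches the paper's. You use Bixby's Lemma to get $\co(M\backslash e)$ $3$-connected, take a proper $3$-connected restriction $N|Z$, lift it to $M''=M|(Z\cup\{y_i:i\in I\}\cup e)$ (the paper's $L$), and try to show $M''$ is $3$-connected. The gap is the step you treat as a fact: that $e$ is not a coloop of $M''$ once $I\neq\emptyset$.

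\textbf{That step is false as stated.} If some index $j\notin I$, the triad $\{e,x_j,y_j\}$ meets $E(M'')$ only in $e$, so $e$ \emph{is} a coloop of $M''$. What must be proved is that $e$ is not a coloop, and hence that $Z$ contains every representative $x_i$.

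\textbf{Your sketch cannot prove it.} You use the hypothesis that $\si(M/e)$ is not $3$-connected only to invoke Bixby's Lemma, and the conclusion fails under that weaker hypothesis. Let $G$ be obtained from $K_{3,3}$, with parts $\{1,2,3\}$ and $\{1',2',3'\}$, by subdividing $33'$ with a vertex $u$ and adding a vertex $v$ adjacent to $u$, $1$, and $2$. Then $G$ is $3$-connected. Every edge has an end of degree $3$, and deleting any vertex and repeatedly deleting vertices of degree less than $3$ destroys the graph. So $G$ has no proper subgraph of minimum degree at least $3$, and $M(G)$ is super-minimally $3$-connected. For $e=uv$, the matroid $\co(M(G)\backslash e)\cong M(K_{3,3}+12)$ is $3$-connected but has the proper $3$-connected restriction $M(K_{3,3})$. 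Lifting that restriction makes $e$ a pendant edge. Here $\si(M(G)/e)$ is $3$-connected, so the lemma itself survives.

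\textbf{The pair claim also needs the hypothesis.} Your expectation that nontrivial series classes of $M\backslash e$ are pairs does not follow from $3$-connectivity and super-minimality. For the super-minimally $3$-connected matroid $U_{4,6}$, each deletion $M\backslash e\cong U_{4,5}$ is a single series class.

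\textbf{The missing idea} is to exploit a nonminimal $2$-separation $\{X,Y\}$ of $M/e$. Since $\si(M/e)$ is not $3$-connected, $e\in\cl_M(A)\cap\cl_M(B)$ for every $2$-separation $\{A,B\}$ of $M/e$. Orthogonality then shows that each triad through $e$ has the form $\{e,f_i,g_i\}$ with $f_i\in X$ and $g_i\in Y$; here $f_i$ plays the role of your $y_i$ and $g_i$ of your $x_i$. Moreover, $f_i\notin\cl_{M/e}(Y)$ and $g_i\notin\cl_{M/e}(X)$, and distinct triads through $e$ meet only in $e$. This last fact is what gives the pair structure of the series classes.

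Now suppose $e$ were a coloop of $L$, and let $F_I=\{f_i:i\in I\}$. Then $N=(M/e/F_I)|E(N)$ would be a $3$-connected minor of $M/e$ with at least four elements. Because $\{f_i,g_i\}$ is a series pair of $L\backslash e$, the same holds for the isomorphic matroids obtained by contracting $g_1$ (or $g_1,g_2$) in place of $f_1$ (or $f_1,f_2$). Each such minor must have at most one element on one side of $\{X,Y\}$. Combining this with the closure conditions rules out every case.

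Only after this does the $2$-separation analysis of $L$ that you outline go through.
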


\begin{proof}
    Suppose that $\si(M/e)$ is not $3$-connected. Then, for every $2$-separation $\{A,B\}$ of $M/e$, we know $e\in\cl_M(A)\cap\cl_M(B)$. First we show the following.
    \begin{sublemma}\label{sublem:exactly one element}
        If $\{A,B\}$ is a $2$-separation of $M/e$ and $\{e,f,g\}$ is a triad $T^*$ of $M$, then $T^*$ meets each of $A$ and $B$ in exactly one element.
    \end{sublemma}

    Suppose $\{f,g\}\subseteq A$. Because $e\in\cl_M(B)$, there is a circuit $C$ of $M$ in $B\cup\{e\}$ containing $e$. However, $|C\cap T^*|=1$, a contradiction. By symmetry, ~\ref{sublem:exactly one element} follows.\\

    Since $\si(M/e)$ is not $3$-connected, $M/e$ has a nonminimal $2$-separation $\{X,Y\}$.
    \begin{sublemma}\label{sublem:not in the closure}
        If $\{e,f,g\}$ is a triad of $M$ such that $f\in X$ and $g\in Y$, then $f\notin \cl_{M/e}(Y)$ and $g\notin\cl_{M/e}(X)$.
    \end{sublemma}

    To see this, observe that, as $\{X,Y\}$ is a nonminimal $2$-separation, $\min\{|X|,|Y|\}\geq 3$. If $f\in\cl_{M/e}(Y)$, then, by Proposition~\ref{prop:closure coclosure}, $\{X-\{f\},Y\cup\{f\}\}$ is also a $2$-separation of $M/e$. However, $\{f,g\}\subseteq Y\cup\{f\}$, contradicting~\ref{sublem:exactly one element}. By symmetry,~\ref{sublem:not in the closure} follows.\\

    \begin{sublemma}\label{sublem: two triads intersect at e}
        Suppose $T_1^*$ and $T_2^*$ are two distinct triads of $M$ that both contain $e$. Then $T_1^*\cap T_2^*=\{e\}$.
    \end{sublemma}

    Suppose $T_1^*=\{f_1,e,g\}$ and $T_2^*=\{f_2,e,g\}$. Without loss of generality, assume $g\in Y$. Then, by~\ref{sublem:exactly one element}, we have $\{f_1,f_2\}\subseteq X$. By circuit elimination, $\{f_1,f_2,e\}$ is a triad of $M$. But $|\{f_1,f_2,e\}\cap X|=2$, contradicting~\ref{sublem:exactly one element}. This establishes~\ref{sublem: two triads intersect at e}.\\

    Since $\si(M/e)$ is not $3$-connected, by Bixby’s Lemma, it follows that $\co(M\backslash e)$ is $3$-connected. Because $M$ is super-minimally $3$-connected, the matroid $M\backslash e$ contains at least one series pair. Therefore, $M$ has at least one triad containing $e$. Suppose $M$ has exactly $k$ triads $T_1^*,T_2^*,\dots,T_k^*$ containing $e$. By~\ref{sublem:exactly one element} and~\ref{sublem: two triads intersect at e}, we know, for each $i\in\{1,2,\dots,k\}$, the triad $T_i^*$ has the form $\{e,f_i,g_i\}$ such that $f_i\in X$ and $g_i\in Y$. Moreover, $f_i=f_j$ or $g_i=g_j$ if and only if $i=j$ (see Figure~\ref{fig_all_triads} for an illustrative diagram). Thus, we may assume that $\co(M\backslash e)=M\backslash e/\{f_1,f_2,\dots,f_k\}$.
    \begin{center}
    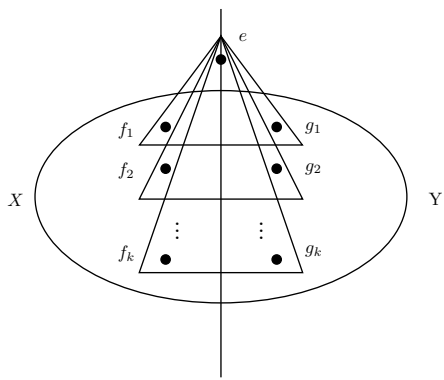
\begin{figure}[htb]
    \hbox to \hsize{
	\hfil
	\resizebox{6cm}{!}{\tikzset{every picture/.style={line width=0.75pt}} 

\begin{tikzpicture}[x=0.75pt,y=0.75pt,yscale=-1,xscale=1]

\draw   (128,144.09) .. controls (128,100.5) and (189.83,65.17) .. (266.1,65.17) .. controls (342.37,65.17) and (404.2,100.5) .. (404.2,144.09) .. controls (404.2,187.67) and (342.37,223) .. (266.1,223) .. controls (189.83,223) and (128,187.67) .. (128,144.09) -- cycle ;
\draw  [fill={rgb, 255:red, 0; green, 0; blue, 0 }  ,fill opacity=1 ] (262.65,42.09) .. controls (262.65,40.18) and (264.2,38.64) .. (266.1,38.64) .. controls (268,38.64) and (269.55,40.18) .. (269.55,42.09) .. controls (269.55,43.99) and (268,45.53) .. (266.1,45.53) .. controls (264.2,45.53) and (262.65,43.99) .. (262.65,42.09) -- cycle ;
\draw    (266.1,4.5) -- (266.1,50.92) -- (266.1,278.33) ;
\draw  [fill={rgb, 255:red, 0; green, 0; blue, 0 }  ,fill opacity=1 ] (221.55,92.22) .. controls (221.55,90.32) and (223.1,88.78) .. (225,88.78) .. controls (226.9,88.78) and (228.45,90.32) .. (228.45,92.22) .. controls (228.45,94.13) and (226.9,95.67) .. (225,95.67) .. controls (223.1,95.67) and (221.55,94.13) .. (221.55,92.22) -- cycle ;
\draw  [fill={rgb, 255:red, 0; green, 0; blue, 0 }  ,fill opacity=1 ] (221.55,123.22) .. controls (221.55,121.32) and (223.1,119.78) .. (225,119.78) .. controls (226.9,119.78) and (228.45,121.32) .. (228.45,123.22) .. controls (228.45,125.13) and (226.9,126.67) .. (225,126.67) .. controls (223.1,126.67) and (221.55,125.13) .. (221.55,123.22) -- cycle ;
\draw  [fill={rgb, 255:red, 0; green, 0; blue, 0 }  ,fill opacity=1 ] (304.05,92.22) .. controls (304.05,90.32) and (305.6,88.78) .. (307.5,88.78) .. controls (309.4,88.78) and (310.95,90.32) .. (310.95,92.22) .. controls (310.95,94.13) and (309.4,95.67) .. (307.5,95.67) .. controls (305.6,95.67) and (304.05,94.13) .. (304.05,92.22) -- cycle ;
\draw  [fill={rgb, 255:red, 0; green, 0; blue, 0 }  ,fill opacity=1 ] (304.05,123.22) .. controls (304.05,121.32) and (305.6,119.78) .. (307.5,119.78) .. controls (309.4,119.78) and (310.95,121.32) .. (310.95,123.22) .. controls (310.95,125.13) and (309.4,126.67) .. (307.5,126.67) .. controls (305.6,126.67) and (304.05,125.13) .. (304.05,123.22) -- cycle ;
\draw  [fill={rgb, 255:red, 0; green, 0; blue, 0 }  ,fill opacity=1 ] (221.55,190.72) .. controls (221.55,188.82) and (223.1,187.28) .. (225,187.28) .. controls (226.9,187.28) and (228.45,188.82) .. (228.45,190.72) .. controls (228.45,192.63) and (226.9,194.17) .. (225,194.17) .. controls (223.1,194.17) and (221.55,192.63) .. (221.55,190.72) -- cycle ;
\draw  [fill={rgb, 255:red, 0; green, 0; blue, 0 }  ,fill opacity=1 ] (304.05,190.72) .. controls (304.05,188.82) and (305.6,187.28) .. (307.5,187.28) .. controls (309.4,187.28) and (310.95,188.82) .. (310.95,190.72) .. controls (310.95,192.63) and (309.4,194.17) .. (307.5,194.17) .. controls (305.6,194.17) and (304.05,192.63) .. (304.05,190.72) -- cycle ;
\draw   (266.1,24.73) -- (326.57,105.61) -- (205.63,105.61) -- cycle ;
\draw   (266.1,24.73) -- (326.78,145.83) -- (205.42,145.83) -- cycle ;
\draw   (266.1,24.73) -- (326.78,200.5) -- (205.42,200.5) -- cycle ;

\draw (106,140.4) node [anchor=north west][inner sep=0.75pt]    {$X$};
\draw (419,139.4) node [anchor=north west][inner sep=0.75pt]    {$\mathrm{Y}$};
\draw (229.55,153.73) node [anchor=north west][inner sep=0.75pt]  [font=\Large]  {$\vdots $};
\draw (291.95,153.73) node [anchor=north west][inner sep=0.75pt]  [font=\Large]  {$\vdots $};
\draw (187.93,87.13) node [anchor=north west][inner sep=0.75pt]    {$f_{1}$};
\draw (187.93,117.47) node [anchor=north west][inner sep=0.75pt]    {$f_{2}$};
\draw (187.93,178.8) node [anchor=north west][inner sep=0.75pt]    {$f_{k}$};
\draw (327.27,87.13) node [anchor=north west][inner sep=0.75pt]    {$g_{1}$};
\draw (327.27,117.47) node [anchor=north west][inner sep=0.75pt]    {$g_{2}$};
\draw (327.27,178.8) node [anchor=north west][inner sep=0.75pt]    {$g_{k}$};
\draw (278,21.4) node [anchor=north west][inner sep=0.75pt]    {$e$};

\end{tikzpicture}}%
	\hfil
    }
    \caption{Every triad of $M$ containing $e$ has the form $\{e,f_i,g_i\}$.}\label{fig_all_triads}
    \end{figure}
    \end{center}

    Suppose $\co(M\backslash e)$ is not super-minimally $3$-connected. Then it has a proper $3$-connected restriction $N$ such that $|E(N)|\geq 4$. Next we show that

    \begin{sublemma}\label{sublem:one of gi}
        $E(N)$ contains $g_i$ for some $i\in\{1,2,\dots,k\}$.
    \end{sublemma}

    Suppose that $E(N)\cap \{g_1,g_2,\dots,g_k\}=\emptyset$. We know that
    \[N=\co(M\backslash e)|E(N)=M\backslash e/\{f_1,f_2,\dots,f_k\}\backslash\{g_1,g_2,\dots,g_k\}|E(N).\]
    Since $\{e,f_i,g_i\}$ is a triad for all $i\in\{1,2,\dots,k\}$, it follows that
    \[N=M\backslash\{e,f_1,f_2,\dots,f_k,g_1,g_2,\dots,g_k\}|E(N),\]
    which is a restriction of $M$, contradicting $M$ being super-minimally $3$-connected. Therefore~\ref{sublem:one of gi} holds.\\

    Let $I=\{i\in\{1,2,\dots,k\}:g_i\in E(N)\}$, and let $F_I=\{f_i:i\in I\}$ and $G_I=\{g_i:i\in I\}$. By~\ref{sublem:one of gi}, without loss of generality, we may assume that $g_1\in G_I$. Let $L$ be $M|(E(N)\cup F_I\cup\{e\})$ and recall that $\{f_1,f_2,\dots,f_k\}\subseteq X$ and $\{g_1,g_2,\dots,g_k\}\subseteq Y$. We show next that

    \begin{sublemma}\label{sublem:e is not a coloop}
        $e$ is not a coloop of $L$.
    \end{sublemma}

    It is clear that $N=L\backslash e/F_I$. Suppose $e$ is a coloop of $L$. Then
    \[N=L/e/F_I=(M/e/F_I)|E(N),\]
    which is a $3$-connected minor of $M/e$ with at least four elements. Recall that $\{X,Y\}$ is a $2$-separation of $M/e$. Then $\min\{|E(N)\cap X|,|E(N)\cap Y|\}\leq 1$. 
    
    By~\ref{sublem:one of gi}, $|E(N)\cap Y|\geq |G_I|\geq 1$. Suppose $|E(N)\cap Y|=1$. Then $E(N)\cap Y=\{g_1\}$ and $g_1\in\cl_N(E(N)\cap X)$. Because $F_I\subseteq X$, it follows that $g_1\in \cl_{M/e}(X)$, contradicting~\ref{sublem:not in the closure}.

    We now know that $|E(N)\cap X|\leq 1$ and $|E(N)\cap Y|\geq 3$. Because $e$ is a coloop of $L$ and $N$ is $3$-connected, $f_i$ and $g_i$ are in series in $L\backslash e$ for all $i\in I$. Therefore,
    \[L/ e/((F_I\cup\{g_1\})-\{f_1\})\]
    is a $3$-connected minor $N'$ of $M/e$ that is isomorphic to $N$. Because $\{X,Y\}$ is a $2$-separation of $M/e$, we know $\min\{|E(N')\cap X|,|E(N')\cap Y|\}\leq 1$. Clearly, $|E(N')\cap X|\geq 1$ and $|E(N')\cap Y|\geq 2$. Hence $E(N')\cap X=\{f_1\}$. If $F_I=\{f_1\}$, then $f_1\in\cl_{N'}(E(N')\cap Y)$. Because $(F_I\cup\{g_1\})-\{f_1\}=\{g_1\}$ and $g_1\in Y$, it follows that $f_1\in\cl_{M/e}(Y)$, contradicting~\ref{sublem:not in the closure}. Thus $F_I\neq \{f_1\}$. Without loss of generality, we may assume that $f_2\in F_I$. Let $N''$ be the matroid
    \[L/e/((F_I\cup\{g_1,g_2\})-\{f_1,f_2\}).\]
    Clearly, $N''\cong N$. It is straightforward to check that $|E(N'')\cap X|=2$ and $|E(N'')\cap Y|\geq 2$, a contradiction as $N$ is $3$-connected. We conclude that~\ref{sublem:e is not a coloop} holds. \\

    In the original matroid $M$, the set $\{e,f_i,g_i\}$ is a triad for all $i\in\{1,2,\dots,k\}$. The restriction $L$ of $M$ contains $e$, but neither $f_i$ nor $g_i$ if $i\in\{1,2,\dots,k\}-I$. If $\{1,2,\dots,k\}-I\neq\emptyset$, then the element $e$ will be a coloop in $L$, contradicting~\ref{sublem:e is not a coloop}. Therefore, $I=\{1,2,\dots,k\}$. We show next that

    \begin{sublemma}\label{sublem:L is 3-con}
        $L$ is $3$-connected.
    \end{sublemma}

    Because $L\backslash e$ has $\{f_i,g_i\}$ as a series pair for all $i\in I$, and $L\backslash e/F_I=N$, which is a $3$-connected matroid with at least four elements, it is clear that $L\backslash e$ is $2$-connected. Moreover, since $e$ is neither a loop nor a coloop, $L$ is also $2$-connected. Suppose $\{C,D\}$ is a $2$-separation of $L$. Because the series pairs in $L\backslash e$ are precisely $\{f_i,g_i\}$ for all $i\in I$ and each $\{e,f_i,g_i\}$ is a triad in $L$, we know $L$ has no series pairs. Clearly, $L$ has no parallel pairs either, so $\min\{|C|,|D|\}\geq 3$. Without loss of generality, we may assume that $e\in C$. By Proposition~\ref{prop:closure coclosure}, we may also assume that, for each $i\in I$, the set $\{f_i,g_i\}$ is contained in either $C$ or $D$. Let $s=|C\cap F_I|$ and $t=|D\cap F_I|$. Clearly $s+t=|I|=k$. Because $N=L\backslash e/F_I$, we see that
    \begin{align*}
        r_N(C-\{e\}-F_I)+r_N(D-F_I)-r(N)&\leq (r_L(C)-s)+(r_L(D)-t)-(r(L)-k)\\
        &=r_L(C)+r_L(D)-r(L)\\
        &\leq 1.
    \end{align*}
    Moreover, $|C-\{e\}-F_I|\geq \max\{2-s,s\}$ and $|D-F_I|\geq \max\{3-t,t\}$. Since $N$ is $3$-connected and $\{C-\{e\}-F_I, D-F_I\}$ is not a $2$-separation, the only possibility is that $C=\{e,f_j,g_j\}$ for some $j\in I$. However, $\{e,f_j,g_j\}$ is a triad and $L$ is not isomorphic to $U_{2,4}$, so $r_L(C)+r_L^*(C)-|C|=2$. Thus $\{C,D\}$ is not a $2$-separation, a contradiction. Therefore~\ref{sublem:L is 3-con} holds.\\

    Because $|E(L)|\geq |E(N)|\geq 4$, we see that $L$ is a $3$-connected proper restriction of $M$ with at least four elements. As $M$ is super-minimally $3$-connected, this contradiction finishes the proof. 
\end{proof}

\begin{lemma}\label{lem:reduce triangle}
    Suppose $M$ is a super-minimally $3$-connected matroid with at least seven elements and $\{e,f,g\}$ is a triangle of $M$. Then $M$ has an element $x$ of $\{e,f,g\}$ such that $\co(M\backslash x)$ is super-minimally $3$-connected.
\end{lemma}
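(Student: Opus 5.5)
The plan is to combine Tutte's Triangle Lemma with Lemma~\ref{lem:sm3c_si_or_co}. Since $|E(M)|\geq 7$, Proposition~\ref{prop:small smkc}(ii) shows that $M$ is minimally $3$-connected, so $M\backslash x$ is not $3$-connected for every $x\in\{e,f,g\}$. Tutte's Triangle Lemma, applied to each pair from $\{e,f,g\}$, then gives that every element of the triangle lies in a triad meeting the triangle in exactly two elements. By Lemma~\ref{lem:sm3c_si_or_co}, for each $x\in\{e,f,g\}$ either $\co(M\backslash x)$ is super-minimally $3$-connected, in which case we are done, or $\si(M/x)$ is $3$-connected. So we may assume that $\si(M/e)$, $\si(M/f)$ and $\si(M/g)$ are all $3$-connected, and we aim for a contradiction.

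The contradiction should come from the triads. Take a triad $T^*=\{e,f,h\}$, which exists after relabelling. I would exploit the fan $g,e,f,h$ and then compare the two contractions $M/e$ and $M/f$. In $M/e$, the elements $f$ and $g$ are parallel, and $\{f,h\}$ is a cocircuit of $M/e$, so it is a series pair in $M/e\backslash g$. Now $\si(M/e)\cong M/e\backslash g$ up to the removal of any further parallel classes, and a $3$-connected matroid with at least four elements has no series pair. Hence $\si(M/e)$ can be $3$-connected only if $\si(M/e)$ has at most three elements, which $|E(M)|\geq 7$ should rule out. The one subtlety is that $\si(M/e)$ might delete $f$ rather than $g$. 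In that case I would instead examine the cocircuit $\{f,h\}$ after replacing $f$ by its parallel partner $g$: since $\{e,f,h\}$ is a triad, $r(M\backslash\{e,f,h\})<r(M)$, but $g\in\cl(\{e,f\})$, so the argument must be made carefully. The cleanest way is to pick the contraction element so that it lies in the triad together with the element that survives simplification.

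Concretely, I would choose $x$ to be an element of $T^*\cap\{e,f,g\}$, say $e$, and simplify $M/e$ by deleting $g$, the element outside $T^*$. This is legitimate, since simplification can keep any representative of each parallel class. Then $\{f,h\}$ is a cocircuit of $M/e\backslash g$, because the cocircuit $\{e,f,h\}$ of $M$ avoids $g$ and contracting $e$ leaves $\{f,h\}$ as a cocircuit. So $\si(M/e)$ has a series pair, or even a coloop, and therefore cannot be $3$-connected once it has at least four elements. It has at least $|E(M)|-1-(\text{parallel removals})$ elements. Bounding the number of parallel removals is the main obstacle: I need $\si(M/e)$ to have at least four elements. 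Since $r(M/e)=r(M)-1$ and $M$ is $3$-connected with at least seven elements, $r(M)\geq 3$, and a simple matroid of rank at least two that is $3$-connected and has fewer than four elements would be $U_{2,3}$. This would force $r(M)=3$ with every element on a line through $e$, making $M$ a rank-$3$ matroid in which all elements other than $e$ lie on a few lines through $e$. That configuration should contradict $M$ having no $U_{2,4}$-restriction (super-minimality) together with $|E(M)|\geq 7$, or contradict the triad $\{e,f,h\}$. I expect this small-case cleanup to be the step needing the most care. The conclusion is that $\si(M/e)$ is not $3$-connected, so Lemma~\ref{lem:sm3c_si_or_co} gives that $\co(M\backslash e)$ is super-minimally $3$-connected, as required.
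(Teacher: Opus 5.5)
\textbf{There is a genuine gap: the key cocircuit claim is false, and the choice of $x$ inside the triad fails in general.}

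You claim that $\{f,h\}$ is a cocircuit of $M/e$, and hence of $M/e\backslash g$. This is false. The cocircuits of $M/e$ are exactly the cocircuits of $M$ that avoid $e$. It is deleting $e$, not contracting it, that shrinks the triad $\{e,f,h\}$ to $\{f,h\}$. In fact, since $M^*$ is simple, $\{f,h\}$ is independent in $M^*\backslash e=(M/e)^*$, so $\{f,h\}$ is coindependent in $M/e$.

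The choice of $x$ inside the triad is also wrong, not just its justification. Take $M=M(\W_k)$ with $k\geq 4$, which is super-minimally $3$-connected with $2k\geq 8$ elements. Write $s_j$ for the spokes and $r_j$ for the rim elements, so the triangles are $\{s_j,r_j,s_{j+1}\}$ and the triads are $\{r_{j-1},s_j,r_j\}$.
\begin{itemize}
\item For the triangle $\{s_i,r_i,s_{i+1}\}$, both triads meeting it in two elements contain $r_i$. So your argument permits $x=r_i$.
\item However, $\si(M/r_i)\cong M(\W_{k-1})$ is $3$-connected, so Lemma~\ref{lem:sm3c_si_or_co} gives nothing.
\item Worse, $\co(M\backslash r_i)$ has a parallel pair, so it is not even $3$-connected. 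For example, contracting $s_i$ from the series pair $\{r_{i-1},s_i\}$ of $M\backslash r_i$ makes $r_{i-1}$ parallel to $s_{i-1}$.
\end{itemize}
Your argument treats $e$ and $f$ symmetrically, so no refinement of it can single out the correct element of $T^*\cap\{e,f,g\}$.

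\textbf{The missing idea is to contract the element of the triangle that lies outside the triad, namely $g$.}
\begin{itemize}
\item In $M/g$, the elements $e$ and $f$ are parallel. Whichever one simplification deletes, say $f$, the triad $\{e,f,h\}$ makes $\{e,h\}$ a cocircuit of $M\backslash f$.
\item This cocircuit avoids $g$, so it survives as a cocircuit of $M/g\backslash f$. Thus $\si(M/g)$ has a series pair or a coloop.
\item For the size bound you need $r(M)\geq 4$. The paper gets this from the hyperplane $E(M)-\{e,f,h\}$: it has at least four elements, and since $M$ is simple with no $U_{2,4}$-restriction, it has rank at least three. Alternatively, Lemma~\ref{lem:rank at most six} together with $|E(M)|\geq 7$ rules out $r(M)\leq 3$.
\item Then $r(\si(M/g))\geq 3$. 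By Table~\ref{table:all sm3c matroids}, a $3$-connected $\si(M/g)$ would have at least five elements, and $\{e,h\}$ would then be $2$-separating, a contradiction.
\end{itemize}
So $\si(M/g)$ is not $3$-connected, and Lemma~\ref{lem:sm3c_si_or_co} applied to $g$ shows that $\co(M\backslash g)$ is super-minimally $3$-connected.
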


\begin{proof}
   Since $|E(M)|\ge7$, Proposition~\ref{prop:small smkc} implies that $M$ is minimally $3$-connected. Consequently, none of $M\backslash e$, $M\backslash f$, or $M\backslash g$ is $3$-connected. After possibly relabeling $e$, $f$, and $g$, we deduce by Tutte’s Triangle Lemma that there is an element $h$ such that $\{e,f,h\}$ is a triad of $M$. Now we show that \begin{sublemma}\label{sublem:M/g is not 3-con} 
        $\si(M/g)$ is not $3$-connected. 
   \end{sublemma}
    Suppose $\si(M/g)$ is $3$-connected. Let $H=E(M)-\{e,f,h\}$, which is a hyperplane of $M$ (see Figure~\ref{fig_noncontractible}). Since $|E(M)|\ge7$, we have $|H|\ge4$. Because $M$ is simple and has no $U_{2,4}$-restriction, $r(H)\ge3$, and hence $r(M)\ge4$. Therefore $r(M/g)=r(\si(M/g))\geq 3$.  By Table~\ref{table:all sm3c matroids}, we know $\si(M/g)$ has at least five elements. Note that $e$ and $f$ are parallel in $M/g$. In the simplification, deleting one of them (say $f$) leaves $e$ and $h$ in series in $\si(M/g)$. Since $\si(M/g)$ has at least five elements, $\{e,h\}$ is a $2$-separating set, a contradiction. Therefore~\ref{sublem:M/g is not 3-con} holds.

   \begin{center}
    \begin{figure}[htb]
    \hbox to \hsize{
	\hfil
	\resizebox{4cm}{!}{\tikzset{every picture/.style={line width=0.75pt}} 

\begin{tikzpicture}[x=0.75pt,y=0.75pt,yscale=-1,xscale=1]

\draw  [fill={rgb, 255:red, 0; green, 0; blue, 0 }  ,fill opacity=1 ] (273.38,107.41) .. controls (273.38,105) and (275.33,103.05) .. (277.74,103.05) .. controls (280.15,103.05) and (282.1,105) .. (282.1,107.41) .. controls (282.1,109.82) and (280.15,111.77) .. (277.74,111.77) .. controls (275.33,111.77) and (273.38,109.82) .. (273.38,107.41) -- cycle ;
\draw  [fill={rgb, 255:red, 0; green, 0; blue, 0 }  ,fill opacity=1 ] (221.27,106.12) .. controls (221.27,103.71) and (223.22,101.76) .. (225.63,101.76) .. controls (228.03,101.76) and (229.99,103.71) .. (229.99,106.12) .. controls (229.99,108.53) and (228.03,110.48) .. (225.63,110.48) .. controls (223.22,110.48) and (221.27,108.53) .. (221.27,106.12) -- cycle ;
\draw  [dash pattern={on 4.5pt off 4.5pt}]  (200.06,6.83) -- (200.06,206.72) ;
\draw  [draw opacity=0] (199.86,173.44) .. controls (192.1,176.55) and (183.63,178.26) .. (174.76,178.26) .. controls (137.45,178.26) and (107.2,148.01) .. (107.2,110.7) .. controls (107.2,73.39) and (137.45,43.14) .. (174.76,43.14) .. controls (183.87,43.14) and (192.57,44.94) .. (200.5,48.22) -- (174.76,110.7) -- cycle ; \draw   (199.86,173.44) .. controls (192.1,176.55) and (183.63,178.26) .. (174.76,178.26) .. controls (137.45,178.26) and (107.2,148.01) .. (107.2,110.7) .. controls (107.2,73.39) and (137.45,43.14) .. (174.76,43.14) .. controls (183.87,43.14) and (192.57,44.94) .. (200.5,48.22) ;  
\draw    (200.06,106.78) -- (277.74,107.41) ;
\draw    (199.56,159.28) -- (277.74,107.41) ;
\draw  [fill={rgb, 255:red, 0; green, 0; blue, 0 }  ,fill opacity=1 ] (234.29,133.35) .. controls (234.29,130.94) and (236.24,128.99) .. (238.65,128.99) .. controls (241.06,128.99) and (243.01,130.94) .. (243.01,133.35) .. controls (243.01,135.75) and (241.06,137.71) .. (238.65,137.71) .. controls (236.24,137.71) and (234.29,135.75) .. (234.29,133.35) -- cycle ;
\draw  [fill={rgb, 255:red, 0; green, 0; blue, 0 }  ,fill opacity=1 ] (195.2,159.28) .. controls (195.2,156.87) and (197.15,154.92) .. (199.56,154.92) .. controls (201.96,154.92) and (203.92,156.87) .. (203.92,159.28) .. controls (203.92,161.69) and (201.96,163.64) .. (199.56,163.64) .. controls (197.15,163.64) and (195.2,161.69) .. (195.2,159.28) -- cycle ;
\draw    (199.12,52.88) -- (238.65,133.35) ;
\draw    (199.12,52.88) -- (277.74,107.41) ;
\draw    (199.56,159.28) -- (237.46,80.5) ;

\draw (210.06,164.52) node [anchor=north west][inner sep=0.75pt]    {$g$};
\draw (281.86,118.73) node [anchor=north west][inner sep=0.75pt]    {$e$};
\draw (242.77,144.66) node [anchor=north west][inner sep=0.75pt]    {$f$};
\draw (235.65,94.02) node [anchor=north west][inner sep=0.75pt]    {$h$};
\draw (146,107.4) node [anchor=north west][inner sep=0.75pt]    {$H$};

\end{tikzpicture}}%
	\hfil
    }
    \caption{An illustrative geometric representation of $M$}\label{fig_noncontractible}
    \end{figure}
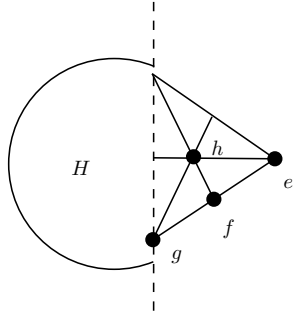
    \end{center}

    By Lemma~\ref{lem:sm3c_si_or_co}, it follows that $\co(M\backslash g)$ is super-minimally $3$-connected, which completes the proof.
    \end{proof}

    The next result extends a result of Oxley~\cite[(5.1)]{Oxley_OMC}.

    \begin{lemma}
        Suppose $M$ is a minimally $3$-connected matroid such that $M\backslash x/y$ is  isomorphic to $M(\W_k)$ or $\W^k$ for some $k\geq 4$. Then $M$ is isomorphic to $M(\W_{k+1})$ or $\W^{k+1}$.
    \end{lemma}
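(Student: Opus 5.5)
We would follow the strategy of Oxley's \cite[(5.1)]{Oxley_OMC}, of which this lemma is an extension. Put $N=M\backslash x/y\cong M(\W_k)$ or $\W^k$ with $k\ge 4$; thus $r(N)=k$, $|E(N)|=2k$, $r(M)=k+1$ and $|E(M)|=2k+2$. The aim is to show that $x$ and $y$ attach to $N$ the way a new spoke and rim element attach when a wheel or whirl grows by one.

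\textbf{Step 1: a triad through $x$ and $y$.} Because $M$ is minimally $3$-connected, $M\backslash x$ fails to be $3$-connected, while $M\backslash x/y=N$ is $3$-connected. Hence Lemma~\ref{lemma:oxley contraction} applies with $n=3$, giving a triad $T^*=\{x,y,z\}$ of $M$. In $M\backslash x$ the pair $\{y,z\}$ is then a series pair, and $M\backslash x/y$ is $3$-connected.

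\textbf{Step 2: $z$ is a spoke of $N$, and $y$ is a new spoke.} By Step 1, $M\backslash x/z\cong N$ as well. We would locate $z$ inside $N$ next.
\begin{itemize}
\item If $z$ were a rim element of $N$, we would use the rim triads of $N$ together with $T^*$ and cocircuit elimination to find a $3$-connected single-element deletion of $M$, contradicting minimality.
\item Once $z$ is known to be a spoke, we consider the triangle of $N$ through $z$. The minimality of $M$ rules out $3$-connected deletions $M\backslash w$, and we would use this to show that $\{y\}$ lifts a rim element so that $y$ and $z$ both act as spokes of a larger wheel or whirl. Here $y$ is inserted into the spoke $z$ via the series pair $\{y,z\}$ of $M\backslash x$.
\item The hypothesis $k\ge 4$ ensures that the wheel or whirl is large enough for the local rim and spoke structure to be unambiguous.
\end{itemize}

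\textbf{Step 3: placing $x$.} We then determine how $x$ meets the rest of $M$. The element $x$ must lie in a triangle with $y$ and an adjacent rim element, since otherwise $x$ would be deletable while preserving $3$-connectivity. The remaining triads must also be consistent with $M$ having $2(k+1)$ elements and rank $k+1$. From this we would reconstruct all rim triangles and spoke triads of $M$, showing that $M$ is a $(k+1)$-spoked wheel or whirl. Finally we would argue that it is a wheel exactly when $N$ is a wheel, unless the single non-circuit rim changes; both outcomes are allowed by the statement.

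\textbf{Main obstacle.} We expect Steps 2 and 3 to be the hardest part: identifying exactly where $y$ and $x$ sit relative to the triangles and triads of $N$. This requires a case analysis on the position of $z$ and on which lines of $N$ become modified in $M$. In each bad case the goal is a contradiction with minimality, obtained by exhibiting a $3$-connected $M\backslash w$. For these cases we would first try Tutte's Triangle Lemma combined with cocircuit elimination against $T^*$.
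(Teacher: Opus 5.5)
There is a genuine gap: Step~2 aims at the wrong configuration. Use the standard meaning of the terms, as in the paper's labelling: a spoke of $N$ lies in two triangles of $N$, and a rim element lies in one. Now test your claims on $M=M(\W_{k+1})$, which is minimally $3$-connected. Take $x=s_j$, a spoke, and $y=r_j$, an adjacent rim element. Then $M\backslash x/y\cong M(\W_k)$, and the only triad containing $x$ and $y$ is $\{r_{j-1},s_j,r_j\}$, so $z=r_{j-1}$. In $N$ this element lies only in the triangle $\{s_{j-1},r_{j-1},s_{j+1}\}$, so $z$ is a \emph{rim} element of $N$. Likewise $y$ is a new rim element rather than a new spoke, and $x$ is the new spoke. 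Moreover, $x$ lies in a triangle with $y$ and the spoke $s_{j+1}$, not with a rim element.

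So your first bullet cannot succeed. It would refute ``$z$ is a rim element'' by producing a $3$-connected deletion $M\backslash w$, which would show that $M(\W_{k+1})$ is not minimally $3$-connected. (If you meant ``spoke'' and ``rim'' the other way round, the target is correct, but Step~2 then contains no argument beyond the case split.)

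The case that must be eliminated is $z$ a spoke, and the paper does it differently from what you suggest. Write the triangles of $N$ as $\{b_i,a_{i+1},b_{i+1}\}$ and its triads as $\{a_i,b_i,a_{i+1}\}$, and let $z=b_1$.
\begin{enumerate}
\item[(a)] Orthogonality with $\{x,y,b_1\}$ keeps $\{b_i,a_{i+1},b_{i+1}\}$ a triangle of $M$ for $2\le i\le k-1$.
\item[(b)] Tutte's Triangle Lemma, where minimality enters, together with the short list of possible triads of $M$, shows that $\{a_i,b_i,a_{i+1}\}$ is a triad of $M$ for $2\le i\le k$.
\item[(c)] Orthogonality with these triads forces the fundamental circuit of $x$ with respect to the basis $\{b_1,\dots,b_k,y\}$ to be $\{x,y,b_1\}$.
\end{enumerate}
That set is then both a triangle and a triad, hence $2$-separating in a matroid with at least ten elements. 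So the contradiction is with the $3$-connectivity of $M$, not the exhibition of a $3$-connected single-element deletion.

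Step~3 has a separate flaw. An element of a minimally $3$-connected matroid that lies in no triangle need not be deletable; $M(K_{3,3})$ is triangle-free and minimally $3$-connected. So ``otherwise $x$ would be deletable'' proves nothing. What a triangle actually gives is essentiality: in a minimally $3$-connected matroid, an element in a triangle can be neither deleted nor contracted.

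The paper's endgame in the surviving case $z=a_1$ rests on exactly this.
\begin{enumerate}
\item[(a)] Orthogonality with $\{x,y,a_1\}$ keeps the triangles $\{b_i,a_{i+1},b_{i+1}\}$ for $1\le i\le k-1$.
\item[(b)] The fundamental circuits of $a_1$ and of $y$ with respect to the basis $\{b_1,\dots,b_k,x\}$ lie in $\{a_1,x,b_1,b_k\}$ and $\{y,x,b_1,b_k\}$ and both contain $x$. Orthogonality with whichever of $\{a_1,b_1,a_2\}$ and $\{y,b_1,a_2\}$ is a triad makes one of them a triangle through $x$.
\end{enumerate}
Hence every element except possibly one is essential. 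The Oxley--Wu result (Theorem~\ref{thm:two non-essential}) then forces $M$ to be a wheel or whirl, necessarily of rank $k+1$.

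Your plan of reconstructing all triangles and triads by hand could in principle replace that appeal. However, it would have to start from the correct configuration and supply the orthogonality arguments that your proposal currently leaves as intentions.
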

    
    \begin{proof}
        Because both $M$ and $M\backslash x/y$ are $3$-connected, and $M\backslash x$ is not $3$-connected, by Lemma~\ref{lemma:oxley contraction}, $x$ and $y$ are in a triad $\{x,y,z\}$ of $M$. We note that $M\backslash x$ is a series-extension of $\W_k$ or $\W^k$. Let
        \[E(M\backslash x/y)=\{a_1,b_1,a_2,b_2,\dots,a_k,b_k\},\]
        for some integer $k\geq 4$, such that the triangles of $M\backslash x/y$ are exactly 
        \[\{b_i,a_{i+1},b_{i+1}\}\quad (i\in\mathbb{Z}_k),\]
        and the triads of $M\backslash x/y$ are exactly 
        \[\{a_i,b_i,a_{i+1}\}\quad (i\in\mathbb{Z}_k).\]
        We know, in $M$, for each $i\in\mathbb{Z}_k$, either
        \[\{b_i,a_{i+1},b_{i+1}\}\quad \text{or }\quad \{b_i,a_{i+1},b_{i+1},y\}\]
        is a circuit, and either
        \[\{a_i,b_i,a_{i+1}\}\quad \text{or }\quad \{a_i,b_i,a_{i+1},x\}\]
        is a cocircuit. Therefore, it is elementary to see the following.

        \begin{sublemma}\label{sublem:all triads}
            If $T^*$ is a triad of $M$, then
            
            \begin{enumerate}
                \item[(1)] $T^*=\{a_i,b_i,a_{i+1}\}$ for some $i\in\mathbb{Z}_k$, or
                \item[(2)] $T^*=\{x,y,z\}$, or
                \item[(3)] $T^*$ contains $y$ but not $x$ and, in particular, $T^*$ is also a triad of $M\backslash x$.
            \end{enumerate}
        \end{sublemma}

        Because of the symmetries of $M(\W_k)$ and $\W^k$, it suffices to consider the following two cases:
        
        \begin{enumerate}
            \item[(\romannum{1})] $z=b_1$, or
            \item[(\romannum{2})] $z=a_1$. 
        \end{enumerate}

        Suppose that (\romannum{1}) holds, that is, $\{x,y,b_1\}$ is a triad in $M$. For each $i\in\{2,3,\dots,k-1\}$, because $|\{b_i,a_{i+1},b_{i+1},y\}\cap\{x,y,b_1\}|=1$, we know $\{b_i,a_{i+1},b_{i+1}\}$ is a triangle in $M$. Next we show that
        \begin{sublemma}\label{sublem:2 to k triads}
            $\{a_i,b_i,a_{i+1}\}$ is a triad of $M$, for each $i\in\{2,3,\dots,k\}$.
        \end{sublemma}

        For each $i\in\{2,3,\dots,k-1\}$, we know $b_i$ is in the triangle $\{b_i,a_{i+1},b_{i+1}\}$. Since $M$ is minimally $3$-connected, by Tutte's Triangle Lemma, $M$ has a triad $T^*$ containing $b_i$ and exactly one of $\{a_{i+1},b_{i+1}\}$. If $y\in T^*$, then $(T^*\cup \{x,y,z\})-\{y\}$ contains a cocircuit containing $x$. Thus $M\backslash x/y$ has $(T^*-\{y\})\cup\{z\}$ as a triad, which contradicts~\ref{sublem:all triads}. Hence, by~\ref{sublem:all triads}, $\{a_i,b_i,a_{i+1}\}$ is a triad in $M$ for all $i\in\{2,3,\dots,k-1\}$. Moreover, it follows by symmetry that $\{a_k,b_k,a_1\}$ is a triad. Hence~\ref{sublem:2 to k triads} holds. \\
        
        Let $\{b_1,b_2,\dots,b_k,y\}$ be a basis $B_y$ of $M$ and let $C_x$ be the fundamental circuit of $x$ with respect to $B_y$. Because $|E(M)|\geq 10$, we know $C_x\neq \{x,y,b_1\}$, otherwise
        \[\lambda_M(\{x,y,b_1\})=r_M(\{x,y,b_1\})+r^*_M(\{x,y,b_1\})-|\{x,y,b_1\}|=1,\]
        which implies that $\{x,y,b_1\}$ is a $2$-separating set, a contradiction. Therefore, $C_x$ contains $b_j$ for some $j\neq 1$. Since $|C_x\cap \{a_j,b_j,a_{j+1}\}|=1$, we deduce that $\{a_j,b_j,a_{j+1}\}$ is not a cocircuit, a contradiction to~\ref{sublem:2 to k triads}. 

        We may now suppose that (\romannum{1}) fails and (\romannum{2}) holds, that is, $\{x,y,a_1\}$ is a triad. For each $i\in\{1,2,\dots, k-1\}$, since $|\{b_i,a_{i+1},b_{i+1},y\}\cap\{x,y,a_1\}|=1$, we deduce that $\{b_i,a_{i+1},b_{i+1}\}$ is a triangle of $M$. Next we show that
        
        \begin{sublemma}\label{sublem:2 to k-1 triads (case 2)}
            $\{a_i,b_i,a_{i+1}\}$ is a triad of $M$, for each $i\in\{2,3,\dots,k-1\}$.
        \end{sublemma}

        Since $\{b_i,a_{i+1},b_{i+1}\}$ is a triangle of $M$ and $M$ is minimally $3$-connected, by Tutte's Triangle Lemma, $b_i$ is in a triad containing exactly one of $a_{i+1}$ and $b_{i+1}$. Because $\{b_j,a_{j+1},b_{j+1}\}$ is a triangle of $M$, for each $j\in\{1,2,\dots, k-1\}$, it is straightforward to check that neither $\{b_i,a_{i+1},y\}$ nor $\{b_i,b_{i+1},y\}$ is a triad of $M$. Therefore, by~\ref{sublem:all triads},  $\{a_i,b_i,a_{i+1}\}$ is a triad. Hence~\ref{sublem:2 to k-1 triads (case 2)} holds. 

        \begin{sublemma}\label{sublem:b1a2a3b3 is a circuit}
            $M$ has $\{b_1,a_2,a_3,b_3\}$ as a circuit.
        \end{sublemma}

        As $\{b_1,a_2,b_2\}$ and $\{b_2,a_3,b_3\}$ are circuits, $M$ has a circuit $C$ containing $b_1$ that is contained in $\{b_1,a_2,a_3,b_3\}$. As $\{a_2,b_2,a_3\}$ and $\{a_3,b_3,a_4\}$ are cocircuits, it follows, by orthogonality, that $C=\{b_1,a_2,a_3,b_3\}$. Thus~\ref{sublem:b1a2a3b3 is a circuit} holds.

        \begin{sublemma}\label{sublem:one of the two triads}
            $M$ has $\{a_1,b_1,a_2\}$ or $\{y,b_1,a_2\}$ as a triad.
        \end{sublemma}

        Since $\{b_1,a_2,b_2\}$ is a triangle, by Tutte's Triangle Lemma, $M$ has a triad $T^*$ containing $b_1$ and exactly one element of $\{a_2,b_2\}$. By~\ref{sublem:all triads}, $T^*$ is one of $\{a_1,b_1,a_2\}$, $\{y,b_1,b_2\}$, or $\{y,b_1,a_2\}$. As $\{b_1,a_2,a_3,b_3\}$ is a circuit of $M$, it follows, by orthogonality, that $T^*$ is $\{a_1,b_1,a_2\}$ or $\{y,b_1,a_2\}$.\\

        Let $\{b_1,b_2,\dots,b_k,x\}$ be a basis $B_x$ of $M$. Let $C_{a_1}$ be the fundamental circuit of $a_1$ with respect to $B_x$, and let $C_y$ be the fundamental circuit of $y$ with respect to $B_x$. By~\ref{sublem:2 to k-1 triads (case 2)}, we deduce that $C_{a_1}$ is a subset of $\{a_1,x,b_1,b_k\}$ and $C_y$ is a subset of $\{y,x,b_1,b_k\}$. Because $M$ is simple and has $\{a_1,b_1,a_2\}$ or $\{y,b_1,a_2\}$ as a triad, at least one of $C_{a_1}$ and $C_y$ is a triangle. Moreover, since $\{x,y,a_1\}$ is a triad of $M$, it is clear that $x$ is contained in both $C_{a_1}$ and $C_y$. Recall that, for each of $i\in\{1,2,\dots, k-1\}$, the set $\{b_i,a_{i+1},b_{i+1}\}$ is a triangle of $M$. Thus, there is at most one element of $M$ that is not contained in a triangle. Since $M$ is minimally $3$-connected, there is at most one element of $M$ that is nonessential. By Theorem~\ref{thm:two non-essential}, $M$ is a whirl or the cycle matroid of a wheel.
    \end{proof}

    Combining the last lemma with Lemma~\ref{lemma:small rank wheel and whirl}, we immediately obtain the following result.

    \begin{corollary}\label{cor:growing wheel and whirl}
        Suppose $M$ is a minimally $3$-connected matroid such that $M\backslash x/y$ is  $M(\W_k)$ or $\W^k$ for some $k\geq 2$. Then $M$ is either $M(\W_{k+1})$ or $\W^{k+1}$.
    \end{corollary}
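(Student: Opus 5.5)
The corollary splits by the value of $k$. When $k\geq 4$, the preceding lemma applies verbatim, so the whole task is the cases $k\in\{2,3\}$.

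\textbf{Sizes and ranks of the relevant matroids.} By convention $M(\W_2)$ and $\W^2$ are interpreted as $U_{2,4}$, since both have rank $2$ and four elements. For every $k\geq 2$ the matroid $M\backslash x/y$ has rank $k$ and exactly $2k$ elements. Since $M\backslash x/y$ is a proper minor of $M$ obtained by one deletion and one contraction, $M$ has rank $k+1$ and exactly $2(k+1)$ elements.

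\textbf{Applying Lemma~\ref{lemma:small rank wheel and whirl}.} For $k\in\{2,3,4\}$ we have $r(M)=k+1\in\{3,4,5\}$. Also $M$ is minimally $3$-connected with $|E(M)|=2r(M)$. So Lemma~\ref{lemma:small rank wheel and whirl} gives $M\cong M(\W_{k+1})$ or $M\cong \W^{k+1}$, which is exactly the desired conclusion. The case $k=4$ is covered twice, by the lemma and by the preceding lemma, which is harmless. For $k\geq 5$ only the preceding lemma is needed.

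\textbf{Care points.} The rank and size bookkeeping needs a little attention. We must have $x\neq y$, which is implicit in writing $M\backslash x/y$. We must also check that $x$ is not a coloop and $y$ is not a loop of $M\backslash x$. This holds because $M$ is $3$-connected with at least four elements, and because $M\backslash x$ has no loops when $M$ is simple. Once these hold, deleting $x$ keeps the rank and contracting $y$ drops it by one. The only real obstacle is the degenerate case $k=2$, where one must confirm that the intended meaning of $\W_2$ and $\W^2$ is $U_{2,4}$. With that convention, Lemma~\ref{lemma:small rank wheel and whirl} at $r=3$ yields $M(\W_3)\cong M(K_4)$ or $\W^3$, as required.
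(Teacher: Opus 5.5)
Your proposal is correct and follows the paper's own route. The paper also combines the preceding lemma (for $k\geq 4$) with Lemma~\ref{lemma:small rank wheel and whirl} applied to $M$, which has rank $k+1$ and $2(k+1)$ elements. Your check that $x$ is not a coloop and $y$ is not a loop is the bookkeeping the paper leaves implicit.

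One small correction: $M(\W_2)$ is normally the cycle matroid of a triangle with a doubled edge, not $U_{2,4}$. This does not affect your argument, since it only uses that $M\backslash x/y$ has rank $k$ and $2k$ elements.
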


\section{Brittle matroids}
A simple graph $G$ is {\it fragile} if $G$ has no $3$-connected subgraphs. Mader~\cite{Mader} showed that every fragile graph is $4$-degenerate and thus $5$-colorable. Recently, Bonnet et al.~\cite{Bonnet} proved that every fragile graph is $4$-colorable, and they also showed the following~\cite[3.1]{Bonnet}. 

\begin{proposition}
    If $G$ is a fragile graph, then 
    \[|E(G)|\leq 2.5 |V(G)|-5.\]
\end{proposition}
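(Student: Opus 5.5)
We argue by induction on $n=|V(G)|$, proving $|E(G)|\le 2.5n-5$ for every fragile graph with $n\ge 4$. Some size normalization is needed: $K_2$ and $K_3$ are fragile but violate the bound, so the statement must be read with a lower bound on $|V(G)|$, and I take $n\ge4$. The base case $n=4$ is direct: a fragile graph on four vertices is not $K_4$, so it has at most $5=2.5\cdot 4-5$ edges. The idea for larger $n$ is to strip low-degree vertices and then cut along small vertex separations. For $2$-separations the arithmetic closes exactly.

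\emph{Reduction to minimum degree at least $3$.} Suppose $n\ge5$ and $G$ has a vertex $w$ of degree at most $2$. Then $G-w$ is fragile on $n-1\ge4$ vertices. By induction,
\[|E(G)|\le 2.5(n-1)-5+2<2.5n-5.\]
So we may assume $\delta(G)\ge 3$. In particular every component of $G$ has at least four vertices.

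\emph{Disconnected case.} If $G$ has components $G_1,\dots,G_m$ with $m\ge2$, each is fragile with at least four vertices. Summing the inductive bounds gives
\[|E(G)|\le 2.5n-5m\le 2.5n-5.\]

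\emph{Separations of order at most $2$.} Now assume $G$ is connected. It is not $3$-connected, since it is a fragile subgraph of itself. Hence $G=G_1\cup G_2$ where $S=V(G_1)\cap V(G_2)$ has $|S|\in\{1,2\}$ and each $V(G_i)-S$ is nonempty.
\begin{enumerate}[(a)]
\item Each piece has at least four vertices. If $G_i$ had at most three vertices, some vertex of $V(G_i)-S$ would have all its neighbours inside $V(G_i)$, giving it degree at most $2$ and contradicting $\delta(G)\ge 3$.
\item Each $G_i$ is a subgraph of $G$, so it is fragile, and the inductive bound applies to it.
\item Let $n_i=|V(G_i)|$, so that $n_1+n_2=n+|S|$. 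Putting any edge inside $S$ into only one side, we get
\[|E(G)|\le 2.5(n+|S|)-10.\]
\item If $|S|=1$ this gives $2.5n-7.5$, and if $|S|=2$ it gives exactly $2.5n-5$.
\end{enumerate}
This completes the induction.

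The main obstacle is the bookkeeping at the boundary rather than any deep step. One must guarantee that every piece in the decomposition is large enough for the inductive hypothesis (at least four vertices), since small pieces such as a triangle violate the bound. One must also make sure edges inside a $2$-cut are not double counted. The $2$-cut step is tight, so there is no slack to absorb an extra edge there. If I ran into trouble with the triangle or $K_2$ normalization, I would first try strengthening the induction to allow these pieces with an explicit correction term.
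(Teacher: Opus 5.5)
Your proof is correct. You are also right that a size hypothesis is needed: a $3$-connected graph has at least four vertices, so $K_1$, $K_2$ and $K_3$ are fragile yet violate $|E(G)|\le 2.5|V(G)|-5$, and the statement should be read for $|V(G)|\ge 4$.

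The paper gives no proof of its own here; it quotes the proposition from Bonnet et al. The closest argument in the paper is its proof that a triangle-free brittle matroid other than $U_{1,1}$ satisfies $|E(M)|\le 2r(M)-2$, and your proof is the graph counterpart of it. Both split along a $1$-separation (a direct sum) or a $2$-separation (a $2$-sum), apply induction to the two sides, and close the $2$-separation step with no slack. Your assigning of edges inside $S$ to one side only plays the role of deleting the basepoint $p$ from both parts.

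The real difference is how small pieces are controlled. In the matroid lemma the only exceptional matroid, $U_{1,1}$, cannot be a part $M_i\backslash p$ of a $2$-sum because $|E(M_i)|\ge 3$. In the direct-sum case it is absorbed by carrying the weaker bound $|E(N)|\le 2r(N)-1$. In the graph setting, the sides of a separation of order at most $2$ can be $K_2$ or $K_3$, which are fragile but violate the bound. A uniform weakening such as $2.5n-c$ that also covers $K_1$ needs $c\le 2.5$, and it no longer closes the tight $2$-separation step. So your preliminary deletion of vertices of degree at most $2$ is exactly the extra step the graph version requires: it forces $\delta(G)\ge 3$, so every component and every side of a separation has at least four vertices.
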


In this section, we extend the concept of fragile graphs to matroids and establish several lemmas that will be used in subsequent proofs. Since the term ``fragile'' has been used with a different meaning in matroid theory, we will instead use the term ``brittle'' throughout this paper. A simple matroid $M$ is {\it brittle} if it has no $3$-connected restriction with at least four elements.

\begin{lemma}
    Suppose $M$ is a brittle matroid. Then one of the following holds.
    
    \begin{enumerate}
        \item[(\romannum{1})] $M = M_1 \oplus M_2$ for some brittle matroids $M_1$ and $M_2$.
        \item[(\romannum{2})] $M = M_3 \oplus_2 M_4$ at basepoint $p$, for some $2$-connected matroids $M_3$ and $M_4$, such that $|E(M_i)|\geq 3$ and $M_i\backslash p$ is brittle for each $i\in\{3,4\}$.
    \end{enumerate}
\end{lemma}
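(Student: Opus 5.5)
The plan is to split according to the connectivity of $M$. First I dispose of degenerate cases. If $M$ is not $2$-connected (including the empty matroid or a single element), then $M=M_1\oplus M_2$ with both parts nonempty. Each $M_i$ is a restriction of $M$, so it is simple. Any $3$-connected restriction of $M_i$ with at least four elements would also be one of $M$. Hence each $M_i$ is brittle and (\romannum{1}) holds. (For $|E(M)|\le 1$ I read the statement as vacuous or covered by a trivial convention; I note this as a side issue.)

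So assume $M$ is $2$-connected. Since $M$ is brittle, $M$ itself is not $3$-connected whenever $|E(M)|\geq 4$. When $|E(M)|\le 3$, a simple $2$-connected matroid is $U_{1,1}$, $U_{1,2}$ is impossible by simplicity, or $U_{2,3}$; these small cases would have to be handled separately or excluded. Otherwise $M$ has a $2$-separation $\{X,Y\}$ with $|X|,|Y|\geq 2$. The standard decomposition (Oxl11, Section 8.3) then writes $M=M_3\oplus_2 M_4$ with basepoint $p$, where $M_3$ and $M_4$ are $2$-connected, $E(M_3)=X\cup p$ and $E(M_4)=Y\cup p$. This gives $|E(M_i)|\ge 3$.

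The remaining point is brittleness of $M_3\backslash p=M|X$ and $M_4\backslash p=M|Y$. These are restrictions of $M$, so they are simple. A $3$-connected restriction of either with at least four elements would be a $3$-connected restriction of $M$, which is impossible. Hence both are brittle and (\romannum{2}) holds.

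The only real obstacle is the bookkeeping for tiny matroids. In particular, $U_{2,3}$ is $2$-connected yet admits neither decomposition with $|E(M_i)|\geq 3$, unless one allows (\romannum{1}) with trivial summands. I expect the intended statement implicitly assumes $|E(M)|\ge 4$, or else treats such cases via (\romannum{1}). I would state that convention explicitly and otherwise follow the argument above.
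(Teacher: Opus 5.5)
Your proof is correct and follows the paper's own argument. You split on whether $M$ is $2$-connected, note that the direct summands and $M_3\backslash p=M|X$, $M_4\backslash p=M|Y$ are restrictions of $M$ and hence brittle, and obtain the $2$-sum from the standard decomposition (Oxley, Theorem~8.3.1).

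Your caveat about tiny cases is also justified. The paper's step ``$M$ is brittle and hence not $3$-connected'' fails for the brittle $3$-connected matroids $U_{1,1}$, $U_{2,3}$ (and the empty matroid). These are $2$-connected, so they are genuine exceptions to the lemma, not instances of your first case as your parenthetical suggests. This is harmless, because the lemma is only applied later to triangle-free brittle matroids with at least three elements.
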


\begin{proof}
    If $M$ is not $2$-connected, then $M = M_1 \oplus M_2$ for some matroids $M_1$ and $M_2$. Since both $M_1$ and $M_2$ are restrictions of $M$, they are also brittle. Hence (\romannum{1}) holds.

    Now assume that $M$ is $2$-connected. As $M$ is brittle and hence not $3$-connected, there are $2$-connected matroids $M_3$ and $M_4$ such that $M = M_3\oplus_2 M_4$ such that $|E(M_3)|\geq 3$ and $|E(M_4)|\geq 3$ (see, for example,~\cite[Theorem~8.3.1]{Oxl11}). Let $p$ be the basepoint of this $2$-sum. Since both $M_3\backslash p$ and $M_4\backslash p$ are restrictions of $M$, they are also brittle. Therefore, (\romannum{2}) holds.
\end{proof}

Observe that $M_1$, $M_2$, $M_3 \backslash p$, and $M_4 \backslash p$ in the previous lemma are all restrictions of $M$. Therefore, the next result follows immediately.

\begin{corollary}\label{cor:tri-free brittle}
    Suppose $M$ is a triangle-free brittle matroid. Then one of the following holds.
    
    \begin{enumerate}
        \item[(\romannum{1})] $M = M_1 \oplus M_2$ for some triangle-free brittle matroids $M_1$ and $M_2$.
        \item[(\romannum{2})] $M = M_3 \oplus_2 M_4$ at basepoint $p$, for some $2$-connected matroids $M_3$ and $M_4$, such that $|E(M_i)|\geq 3$ and $M_i\backslash p$ is triangle-free and brittle for each $i\in\{3,4\}$.
    \end{enumerate}
\end{corollary}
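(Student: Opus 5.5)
The plan is to apply the preceding lemma directly to $M$ and then check that the extra condition \emph{triangle-free} passes to the pieces it produces. Every matroid named in that lemma that must be brittle is a restriction of $M$. Brittleness is inherited by restrictions, and so is triangle-freeness: a triangle of a restriction $M|X$ is a circuit of $M$ of size three, hence a triangle of $M$.

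First case: $M$ is not $2$-connected. The previous lemma, or directly the decomposition into connected components, gives $M = M_1\oplus M_2$ with $M_1$ and $M_2$ brittle. Each $M_i$ equals $M|E(M_i)$. Its circuits are exactly the circuits of $M$ contained in $E(M_i)$, so $M_i$ has no triangle, and (\romannum{1}) holds. Simplicity of each $M_i$, which is part of being brittle, likewise passes to restrictions of the simple matroid $M$.

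Second case: $M$ is $2$-connected. The previous lemma gives $M=M_3\oplus_2 M_4$ at basepoint $p$, with each $M_i$ $2$-connected, $|E(M_i)|\ge 3$, and $M_i\backslash p$ brittle. It remains to see that $M_i\backslash p$ is triangle-free. This uses the standard fact that $M_i\backslash p = M|(E(M_i)-p)$ for a $2$-sum in which both parts are connected. Hence each circuit of $M_i\backslash p$ is a circuit of $M$, and $M_i\backslash p$ has no triangle. Note that $M_3$ and $M_4$ themselves may have triangles through $p$, which is exactly why (\romannum{2}) only asserts triangle-freeness of $M_i\backslash p$.

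There is no real obstacle. The only point needing care is justifying that $M_i\backslash p$ is a restriction of $M$, which is the observation recorded just before the statement; given that, the corollary is immediate.
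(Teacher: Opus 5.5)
Your proposal is correct and is the paper's argument: apply the preceding lemma and observe that $M_1$, $M_2$, $M_3\backslash p$ and $M_4\backslash p$ are restrictions of $M$, so they inherit triangle-freeness along with brittleness. Your justification that $M_i\backslash p = M|(E(M_i)-p)$, using the circuits of a $2$-sum, just makes explicit the step the paper leaves as an observation.
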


The following is a matroid analogue of a result of Bonnet et al.~\cite[3.2]{Bonnet}.

\begin{lemma}
    If $M$ is a triangle-free brittle matroid, then
    \[
        |E(M)| \leq 2r(M) - 2,
    \]
    unless $M \cong U_{1,1}$.
\end{lemma}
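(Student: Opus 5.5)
Let $M$ be a triangle-free brittle matroid not isomorphic to $U_{1,1}$; we show $|E(M)|\le 2r(M)-2$ by induction on $|E(M)|$, using Corollary~\ref{cor:tri-free brittle}. Brittle matroids are simple, so $M$ has no loops. Since $U_{1,1}$ breaks the bound, $U_{1,1}$ components must be tracked separately. So we prove the slightly stronger statement
\[
|E(M)| \le 2r(M) - 2c(M) + u(M),
\]
where $c(M)$ is the number of connected components of $M$ and $u(M)$ is the number of components isomorphic to $U_{1,1}$ (that is, coloops). Components that are not coloops have at least two elements.

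For a connected $M$ other than $U_{1,1}$ this gives $|E(M)|\le 2r(M)-2$, which is what we want. For $U_{1,1}$ it gives $1\le 1$. Both sides are additive over direct sums, so case (i) of Corollary~\ref{cor:tri-free brittle} follows at once from the connected case applied to each component. It therefore suffices to prove $|E(M)|\le 2r(M)-2$ for connected, triangle-free, brittle $M$ with $|E(M)|\ge 2$.

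The small connected cases are settled directly. A connected simple matroid on two elements is impossible (it would be $U_{1,2}$, a parallel pair), and on three elements it is $U_{2,3}$, a triangle, which is excluded. So $|E(M)|\ge 4$. For such $M$, case (ii) gives $M=M_3\oplus_2 M_4$ with basepoint $p$, where each $|E(M_i)|\ge 3$ and each $M_i\backslash p$ is triangle-free and brittle. Write $N_i=M_i\backslash p$. Then
\[
|E(M)| = |E(N_3)|+|E(N_4)| \quad\text{and}\quad r(M)=r(M_3)+r(M_4)-1 .
\]
Each $M_i$ is connected with at least three elements, so $p$ is not a coloop of $M_i$ and $r(N_i)=r(M_i)$. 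Each $N_i$ is a proper restriction of $M$, so the strengthened induction hypothesis applies to it, giving
\[
|E(N_i)| \le 2r(N_i) - 2c(N_i) + u(N_i).
\]

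The key step is to show that, for each $i$,
\[
-2c(N_i)+u(N_i)\le -1 .
\]
Summing then gives $|E(M)|\le 2r(M_3)+2r(M_4)-2 = 2r(M)$. This is still two too many, so the estimate has to be sharpened, and this is where the main obstacle lies.

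The point is that $-2c+u=-1$ exactly when $N_i$ has a single component and that component is $U_{1,1}$, i.e.\ $N_i\cong U_{1,1}$. But then $M_i$ has two elements, contradicting $|E(M_i)|\ge 3$. Hence $-2c(N_i)+u(N_i)\le -2$ for each $i$, and we get
\[
|E(M)|\le 2r(M_3)+2r(M_4)-4 = 2r(M)-2 .
\]

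I will double-check this case analysis. If $N_i$ has $c$ components of which $u$ are coloops, then $-2c+u\le -c\le -2$ whenever $c\ge 2$. If $c=1$ and $u=0$, the value is $-2$. So the only exception is indeed $N_i\cong U_{1,1}$, which we have ruled out. This completes the induction.
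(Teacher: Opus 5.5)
Your proof is correct and is essentially the paper's argument: induction on $|E(M)|$ through the direct-sum/$2$-sum dichotomy of Corollary~\ref{cor:tri-free brittle}, using $r(M)=r(M_3)+r(M_4)-1$ and $r(M_i\backslash p)=r(M_i)$, with $|E(M_i)|\ge 3$ ruling out $M_i\backslash p\cong U_{1,1}$. The only difference is how direct sums are handled: the paper just uses the weaker uniform bound $|E(N)|\le 2r(N)-1$ for each smaller summand, so your exact invariant $2r-2c+u$ is not needed, though it is harmless (and, unlike the stated bound, it also holds for the empty matroid).
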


\begin{proof}
    We proceed by induction on $|E(M)|$. When $|E(M)| = 1$, the only brittle matroid with one element is $U_{1,1}$, and
    \[
        |E(U_{1,1})| = 2r(U_{1,1}) - 1.
    \]
    When $|E(M)| = 2$, the only brittle matroid with two elements is $U_{2,2}$, and clearly
    \[
        |E(U_{2,2})| = 2r(U_{2,2}) - 2.
    \]
    Hence, the result holds for $|E(M)| \in\{1,2\}$.

    Now assume that $|E(M)| \ge 3$, and that the statement holds for all triangle-free brittle matroids with fewer than $|E(M)|$ elements. In particular, for every such matroid $N$, we have
    \begin{sublemma}\label{sublem:2r-1}
        $|E(N)| \le 2r(N) - 1$.
    \end{sublemma}

    Suppose first that $M$ is not $2$-connected. Then $M = M_1 \oplus M_2$ for some triangle-free brittle matroids $M_1$ and $M_2$. Since $|E(M_i)| < |E(M)|$ for each $i \in \{1,2\}$, the inductive hypothesis gives
    \begin{align*}
        |E(M)| &= |E(M_1)| + |E(M_2)| \\
        &\le (2r(M_1) - 1) + (2r(M_2) - 1) \tag{by~\ref{sublem:2r-1}}\\
        &= 2r(M) - 2.
    \end{align*}

    Next, suppose that $M$ is $2$-connected. By Corollary~\ref{cor:tri-free brittle}, we may assume $M=M_3 \oplus_2 M_4$ at basepoint $p$, where $M_3$ and $M_4$ are $2$-connected matroids such that $|E(M_i)| \ge 3$ and $M_i \backslash p$ is triangle-free and brittle for each $i \in \{3,4\}$. Clearly, $|E(M_i)| < |E(M)|$, and since $|E(M_i)| \ge 3$, neither $M_3 \backslash p$ nor $M_4 \backslash p$ is isomorphic to $U_{1,1}$. Thus, by the inductive hypothesis,
    
    \begin{sublemma}\label{sublem:2r-2}
        $|E(M_i \backslash p)| \le 2r(M_i \backslash p) - 2$ for each $i \in \{3,4\}$.
    \end{sublemma}
    
    \noindent Therefore,
    \begin{align*}
        |E(M)| &= |E(M_3)| + |E(M_4)| - 2 \\
        &= |E(M_3 \backslash p)| + |E(M_4 \backslash p)| \\
        &\le 2r(M_3 \backslash p) + 2r(M_4 \backslash p) - 4 \tag{by~\ref{sublem:2r-2}}\\
        &= 2r(M_3) + 2r(M_4) - 4 \tag{as $M_3$ and $M_4$ are $2$-connected}\\
        &= 2r(M) - 2.
    \end{align*}
    This completes the proof.
\end{proof}

With Proposition~\ref{prop:small smkc} and Table~\ref{table:all sm3c matroids}, one may check that if $M$ is a super-minimally $3$-connected matroid with at least four elements, then $M\backslash e$ is brittle for all $e\in E(M)$. The next result follows immediately from the previous lemma.

\begin{corollary}\label{cor:triangle-free sm3c}
    If $M$ is a triangle-free super-minimally $3$-connected matroid with at least four elements, then
    \[|E(M)| \leq 2r(M) - 1.\]
\end{corollary}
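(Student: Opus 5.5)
The statement to prove is Corollary~\ref{cor:triangle-free sm3c}. The plan is to delete one element and apply the preceding lemma on triangle-free brittle matroids to the deletion.

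Let $M$ be triangle-free and super-minimally $3$-connected with $|E(M)|\ge 4$. First dispose of the case $|E(M)|=4$. By Table~\ref{table:all sm3c matroids}, $M\cong U_{2,4}$, which contains triangles. So this case cannot occur, and we may assume $|E(M)|\ge 5$.

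Now $M$ is $3$-connected with at least five elements, so it is simple and cosimple. Choose any $e\in E(M)$ and set $M'=M\backslash e$. Then $M'$ is simple and triangle-free, since it is a restriction of $M$.

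To see that $M'$ is brittle, note that every restriction of $M'$ is a proper restriction of $M$. Since $M$ is super-minimally $3$-connected, $M'$ has no $3$-connected restriction with at least four elements. Hence $M'$ is brittle, as remarked just before the corollary.

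Next, $M'$ is not isomorphic to $U_{1,1}$, because $|E(M')|\ge 4$. The previous lemma therefore gives $|E(M')|\le 2r(M')-2$.

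Because $M$ is $3$-connected with at least two elements, $e$ is not a coloop, so $r(M')=r(M)$. Putting these together,
\[|E(M)|=|E(M')|+1\le 2r(M)-2+1=2r(M)-1.\]

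The only step needing any care is checking that $M\backslash e$ qualifies for the lemma: it must be simple (from $3$-connectivity of $M$ with $|E(M)|\ge 4$), brittle (from super-minimality, since its restrictions are proper restrictions of $M$), and not $U_{1,1}$ (from its size). The rest is the rank computation above.
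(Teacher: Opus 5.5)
Your proof is correct and is the paper's argument: delete any element $e$, note that $M\backslash e$ is a triangle-free brittle matroid because every restriction of it is a proper restriction of $M$, and apply the preceding lemma together with $r(M\backslash e)=r(M)$. The only difference is minor. You rule out $|E(M)|=4$ directly, since $U_{2,4}$ has triangles, whereas the paper covers that case by checking brittleness of all single-element deletions via the small-case proposition and the table.
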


\section{Density of super-minimally $3$-connected matroids}
In this section, we prove a tight upper bound on the number of elements in a rank-$r$ super-minimally $3$-connected matroid.

\begin{proof}[Proof of Theorem~\ref{thm:density_sm3c_matroids}]
Suppose there is a super-minimally $3$-connected matroid $M$ with at least four elements such that
\[
    |E(M)| \ge 2r(M),
\]
and that $M$ is not isomorphic to a whirl of rank at least two or the cycle matroid of a wheel of rank at least three. Choose such a matroid of minimum rank. By Proposition~\ref{prop:small smkc}, $M$ is a minimally $3$-connected matroid. Thus, by Lemma~\ref{lem:rank at most six} and Lemma~\ref{lemma:small rank wheel and whirl}, $r(M)\geq 6$ and hence $|E(M)|\geq 12$. Suppose $M$ is triangle-free. By Corollary~\ref{cor:triangle-free sm3c}, we know $|E(M)|\leq 2r(M)-1$, a contradiction. Therefore, we may assume that $M$ has a triangle $\{e,f,g\}$. By Lemma~\ref{lem:reduce triangle}, $M$ has an element $x$ in $\{e,f,g\}$ such that $\co(M\backslash x)$ is super-minimally $3$-connected. Note that, since $M$ is super-minimally $3$-connected, $\co(M\backslash x)=M\backslash x/\{y_1,y_2,\dots,y_k\}$ for some $k\geq 1$. If $k\geq 2$, then $|E(\co(M\backslash x))|>2r(\co(M\backslash x))$ and $r(\co(M\backslash x))<r(M)$, contradicting the minimality of $M$. Thus, we may assume that $\co(M\backslash x)=M\backslash x/y$. Then,
\begin{align*}
    |E(M\backslash x/y)|&=|E(M)|-2\geq 2r(M)-2=2r(M\backslash x/y).
\end{align*}
Since $M\backslash x/y$ is super-minimally $3$-connected, the choice of $M$ means that $M\backslash x/y$ is either a whirl or the cycle matroid of a wheel. By Corollary~\ref{cor:growing wheel and whirl}, $M$ is either a whirl or the cycle matroid of a wheel, a contradiction.
\end{proof}

\section{Triads in super-minimally $3$-connected matroids}

In addition to determining the extremal density of minimally $3$-connected graphs, Halin~\cite[Satz~6]{Halin-German} proved that every such graph $G$ has at least $\frac{2|V(G)|+6}{5}$ vertices of degree three, and he showed that this bound is sharp. For super-minimally $3$-connected graphs, Ge~\cite[Theorem~1.7]{Ge_sm3c} established the analogous sharp bound by showing that every such graph $G$ has at least $\frac{|V(G)|+3}{2}$ vertices of degree three.

For matroids, triads play an analogous role to vertices of degree three in graphs. Lemos~\cite{Lemos2004,Lemos2007} proved the following results concerning triads in minimally $3$-connected matroids.

\begin{theorem}
Let $M$ be a minimally $3$-connected matroid with at least eight elements. Then the number of elements contained in at least one triad is at least
\[
\frac{5|E(M)|+30}{9}.
\]
\end{theorem}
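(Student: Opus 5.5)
This theorem is due to Lemos and is stated as a cited result, so the paper does not need to reprove it. Here is how I would reconstruct a proof. The approach is a discharging (counting) argument on the set $T$ of elements lying in at least one triad, with triangles as the main structural tool.

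\textbf{Step 1: elements outside triads lie in triangles.} The starting point is a classical fact for minimally $3$-connected matroids. For each $e\notin T$, the deletion $M\backslash e$ is not $3$-connected, yet $\co(M\backslash e)=M\backslash e$ because $e$ is in no triad. Hence, by Bixby's Lemma, $\si(M/e)$ is $3$-connected. Using Oxley's results on minimally $3$-connected matroids (compare Lemma~\ref{lemma:oxley contraction}), one then shows that $e$ lies in a triangle. Moreover, Tutte's Triangle Lemma applies to every triangle, since no single-element deletion is $3$-connected. So every triangle $\Delta$ has at most one element outside $T$, and each remaining element $f\in\Delta$ lies in a triad that meets $\Delta$ in exactly two elements.

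\textbf{Step 2: charge and redistribute.} Put $U=E(M)-T$. Give each element of $U$ charge $1$ and each element of $T$ charge $0$, and send the charge of each $u\in U$ to the triad elements of a triangle containing $u$. The aim is to show that each element of $T$ receives at most $4/5$ in total, up to a bounded correction. That would give $|U|\le \frac45|T|-c$ for a suitable constant $c$, which rearranges to $|T|\ge\frac{5|E(M)|+30}{9}$ when $c=\frac{10}{3}$. To control how much one triad element can absorb, I would use orthogonality between triangles and triads. I would also use the fact that fans cannot be too long without forcing $M$ to be a wheel or whirl, and in that case the bound can be checked directly.

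\textbf{Main obstacle.} The hard step is the local analysis near a triad element that lies in many triangles, each carrying a non-triad element. This situation occurs in configurations such as $M(K_{3,n})$-like structures and their duals. Bounding the absorbed charge by $4/5$ there needs a detailed case analysis of maximal fans. The constant $+30$ should come from examining the two ends of each fan together with the hypothesis $|E(M)|\ge8$, which rules out the small exceptions.
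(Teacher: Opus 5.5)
You are right that the paper does not prove this statement; it quotes it from Lemos. Your reconstruction, however, rests on a false claim. Step~1 asserts that every element outside $T$ lies in a triangle, but the opposite is true. In a minimally $3$-connected matroid with at least four elements, no single-element deletion is $3$-connected. So Tutte's Triangle Lemma applies to a triangle $\{e,f,g\}$ for every choice of the first two elements: it gives a triad through $e$, and by symmetry triads through $f$ and through $g$. Thus every triangle lies inside $T$, not merely ``with at most one element outside $T$,'' and every $u\in U=E(M)-T$ is in no triangle at all. If your Step~1 held, you would get $U=\emptyset$, so $|T|=|E(M)|$ for every such $M$. Since $\frac{5|E(M)|+30}{9}<|E(M)|$ when $|E(M)|\ge 8$, the bound could then never be attained, contradicting the sharpness (Lemos's extremal examples) recorded in the paper. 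Consequently the discharging in Step~2 has nothing to act on: there is no triangle through any $u\in U$ along which to send its charge.

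What does hold for $u\in U$ is the following. Since $u$ is in no triad, $\co(M\backslash u)=M\backslash u$, and this is not $3$-connected. Bixby's Lemma therefore makes $\si(M/u)$ $3$-connected, and $\si(M/u)=M/u$ because $u$ is in no triangle. Lemma~\ref{lemma:oxley contraction} then shows that $M/u\backslash x$ is never $3$-connected, since otherwise $u$ and $x$ would lie in a common triad of $M$. Hence $M/u$ is again minimally $3$-connected. Because the cocircuits of $M/u$ are the cocircuits of $M$ avoiding $u$, $M/u$ has exactly the same triads as $M$. So contracting elements of $U$ leaves $T$ unchanged, and the real content of Lemos's theorem is a bound on how many such elements there can be. That bound has to come from the $2$-separations $(X,Y)$ of $M\backslash u$, which satisfy $|X|,|Y|\ge 3$ and $u\in\cl^*(X)\cap\cl^*(Y)$, not from triangles and fans. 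Independently of this, the estimate that each element of $T$ absorbs at most $\frac45$ is exactly the part you leave unproved, so the proposal is a plan rather than an argument. Finally, a small slip: $|T|\ge\frac{5|E(M)|+30}{9}$ is equivalent to $|U|\le\frac45|T|-6$, so the constant you need is $c=6$, not $\frac{10}{3}$.
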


\begin{theorem}
Let $M$ be a minimally $3$-connected matroid with at least four elements. Then $M$ has at least
\[
\frac{r(M)+6}{4}
\]
triads.
\end{theorem}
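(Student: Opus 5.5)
This theorem is a result of Lemos on minimally $3$-connected matroids. It is stated here as a cited result that the final section will apply, not as something the paper proves. If I had to reconstruct it, I would argue by induction on $|E(M)|$, with Bixby's Lemma and the wheel/whirl structure theory as the main tools. The plan below is that reconstruction.

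\textbf{Reduction step.} The natural move is to pick an element $e$ of $M$ and pass to a smaller minimally $3$-connected matroid. Since $M$ is minimally $3$-connected, $M\backslash e$ is never $3$-connected. Bixby's Lemma then forces either $\si(M/e)$ or $\co(M\backslash e)$ to be $3$-connected.
\begin{enumerate}
\item[(a)] In the deletion case, $e$ lies in some triad. The matroid $\co(M\backslash e)$ has rank $r(M)-1-k$ when $k$ series classes are contracted.
\item[(b)] For such a minor to carry the induction, it must itself be minimally $3$-connected. If it is not, I would delete further elements until it is, tracking how the rank drops at each step.
\end{enumerate}

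\textbf{Triad bookkeeping.} I would then relate triads of the smaller matroid $N$ to triads of $M$. A triad of $N$ lifts to a cocircuit of $M$ of size at most $3$ plus the number of contracted elements it meets. The key claim to aim for is that each rank reduction of $4$ loses at most one triad overall. This rate is exactly what yields the bound $(r+6)/4$, with the base cases being the small minimally $3$-connected matroids (such as $U_{2,3}$), which have at least $2$ triads.

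\textbf{Anchoring with Theorem~\ref{thm:two non-essential} and the main obstacle.} Rather than a global count, I would anchor the argument on Theorem~\ref{thm:two non-essential}. Either $M$ is a wheel or whirl, where the bound is immediate since there are $r$ triads, or $M$ has two nonessential elements, which give reductions whose lost triads can be controlled. The main obstacle is the charging argument: showing that triads of $M$ destroyed by the reduction are compensated by new triads, for example via Tutte's Triangle Lemma applied to triangles through the deleted element. I expect to need a careful case analysis according to whether the reduction element lies in a triangle.
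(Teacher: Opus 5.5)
You are right that the paper contains no proof of this statement: it is quoted from Lemos and only transferred to super-minimally $3$-connected matroids via Proposition~\ref{prop:small smkc}. Read as a proof, however, your reconstruction has a genuine gap, and its central claim points the wrong way. Write $t(\cdot)$ for the number of triads. To push the lower bound from a smaller minimally $3$-connected matroid $N$ up to $M$, you need
\[
t(M)-t(N)\ \ge\ \tfrac{1}{4}\bigl(r(M)-r(N)\bigr).
\]
That is, the larger matroid must have at least one \emph{more} triad per four units of rank. Your key claim that ``each rank reduction of $4$ loses at most one triad'', and your goal that triads destroyed by the reduction be ``compensated by new triads'', both aim at $t(N)\ge t(M)-\frac{1}{4}(r(M)-r(N))$. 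That is an upper bound on $t(M)$, and combined with the inductive hypothesis for $N$ it gives nothing. New triads in the minor are exactly what hurts you. A triad of $M\backslash e/Z$ is $D-e$ for a cocircuit $D$ of $M$ avoiding $Z$; it is deleted elements, not contracted ones, that enlarge the lifted cocircuit. So after the extra deletions in your step (b), a triad of the final minor may lift only to a much larger cocircuit of $M$.

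More careful casework cannot rescue this, because the correct inequality is false for your reductions. Let $M=M(K_{3,4})$ with vertex classes $\{p_1,p_2,p_3\}$ and $\{q_1,\dots,q_4\}$. It is minimally $3$-connected of rank $6$, and its only triads are the four stars of the $q_j$. Take $e=p_1q_1$:
\begin{itemize}
\item $\co(M\backslash e)$ is $M(K_{3,3}+p_2p_3)$, of rank $5$. Your formula $r(M)-1-k$ would give $4$, but deleting $e$ does not lower the rank.
\item $M/e$ is the cycle matroid of a spanning $K_{3,3}$ plus two edges at the contracted vertex.
\end{itemize}
Neither is minimally $3$-connected. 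The only minimally $3$-connected matroid reachable from either by $3$-connectivity-preserving deletions is $M(K_{3,3})$, of rank $5$ with six triads, so $t(M)-t(N)=-2$. By edge-transitivity, every choice of $e$ behaves this way.

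Theorem~\ref{thm:two non-essential} does not help. In a minimally $3$-connected matroid, $M\backslash e$ is never $3$-connected, so a nonessential element only gives you the contraction case $M/e$. Your plan never analyses that case, and as the example shows, it also leaves the class.

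Your base case is also wrong. $U_{2,3}$ has no triads at all, since its cocircuits have size two. It is excluded by the four-element hypothesis precisely because it violates the bound. The smallest instances have rank $3$; for example, $\W^3$ has exactly three triads.

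Proving Lemos's bound needs an idea your sketch does not supply. For the paper's purposes, the citation is all that is used.
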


Both bounds are sharp with extremal examples given in~\cite[p.~172]{Lemos2004} and~\cite[p.~940]{Lemos2007}, respectively. It is straightforward to verify that these extremal examples are not only minimally $3$-connected but also super-minimally $3$-connected. Thus, using Proposition~\ref{prop:small smkc}, we obtain the following analogous sharp bounds for super-minimally $3$-connected matroids.

\begin{corollary}
Let $M$ be a super-minimally $3$-connected matroid with at least eight elements. Then the number of elements contained in at least one triad is at least
\[
\frac{5|E(M)|+30}{9}.
\]
\end{corollary}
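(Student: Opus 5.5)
This corollary is a direct transfer of Lemos's theorem on triads in minimally $3$-connected matroids, and the plan is to reduce to that result through Proposition~\ref{prop:small smkc}. Let $M$ be a super-minimally $3$-connected matroid with $|E(M)|\ge 8$. Here $k=3$, so $2k-2=4$, and $|E(M)|\ge 8>4$. Hence Proposition~\ref{prop:small smkc}(\romannum{2}) applies and shows that $M$ is minimally $3$-connected. The number of elements of $M$ lying in at least one triad is an intrinsic invariant of $M$ and does not depend on which class we regard $M$ as belonging to. Lemos's theorem for minimally $3$-connected matroids with at least eight elements therefore applies verbatim and gives the lower bound $\frac{5|E(M)|+30}{9}$.

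The bound itself needs nothing more. If one also wants to record sharpness, as the surrounding text claims, the remaining step is to check that Lemos's extremal family from~\cite[p.~172]{Lemos2004} is super-minimally $3$-connected. Since those matroids are already minimally $3$-connected, we must rule out any proper $3$-connected restriction with at least four elements.

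For the sharpness check, I would first examine the explicit structure of Lemos's examples: they are built by gluing small pieces along triangles and triads. The plan is then to show that every proper restriction $M|Z$ with $|Z|\ge 4$ either has an element lying in a series pair, or is a direct sum or $2$-sum. Equivalently, we want a $2$-separation in every proper restriction. I would look for such a separation by exploiting triads: if $Z$ omits an element $e$ of a triad $\{e,f,g\}$ but keeps $f$ and $g$, then $\{f,g\}$ contains a cocircuit of $M\backslash e$ and hence of $M|Z$. If $Z$ avoids every triad entirely, I would instead use the block decomposition of the example to find a $2$-sum.

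The main obstacle is this verification, which depends on the specific construction in~\cite{Lemos2004}. The inequality itself is immediate from Proposition~\ref{prop:small smkc} and Lemos's theorem.
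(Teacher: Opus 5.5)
Your proposal is correct and follows the paper's argument: Proposition~\ref{prop:small smkc}(ii) with $k=3$ and $|E(M)|\ge 8>4$ makes $M$ minimally $3$-connected, and Lemos's theorem then gives the bound directly. The sharpness discussion is not needed for the corollary as stated; the paper likewise only remarks that Lemos's extremal examples can be checked to be super-minimally $3$-connected.
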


\begin{corollary}
Let $M$ be a super-minimally $3$-connected matroid with at least four elements. Then $M$ has at least
\[
\frac{r(M)+6}{4}
\]
triads.
\end{corollary}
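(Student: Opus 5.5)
The approach is to transfer Lemos's triad bound for minimally $3$-connected matroids (the last theorem before this corollary) to the super-minimal setting, separating by size according to Proposition~\ref{prop:small smkc}. Let $M$ be super-minimally $3$-connected with at least four elements. Part (\romannum{1}) of Proposition~\ref{prop:small smkc} handles the case $|E(M)|=4$, and part (\romannum{2}) handles $|E(M)|>4$.

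\emph{Large case.} If $|E(M)|\geq 5$, then Proposition~\ref{prop:small smkc}(\romannum{2}) with $k=3$ shows that $M$ is minimally $3$-connected. Since $M$ also has at least four elements, Lemos's theorem applies directly and gives at least $\frac{r(M)+6}{4}$ triads.

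\emph{Small case.} If $|E(M)|=4$, then by Table~\ref{table:all sm3c matroids} we have $M\cong U_{2,4}$. Here $r(M)=2$, so the required bound is $\frac{8}{4}=2$. The triads of $U_{2,4}$ are its $3$-element cocircuits, since $U_{2,4}^*\cong U_{2,4}$. Hence $U_{2,4}$ has four triads, and the bound holds. Alternatively, $U_{2,4}$ is itself minimally $3$-connected, because each single-element deletion is $U_{2,3}$, which is not $3$-connected. Lemos's theorem therefore also covers this case directly.

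\emph{Sharpness.} The sharpness claim rests on the remark already made in the text. The extremal examples in~\cite[p.~940]{Lemos2007} are super-minimally $3$-connected, so they attain the bound within the smaller class.

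The main, and essentially only, obstacle is that the reduction to Lemos's theorem requires the hypotheses to match. This is exactly what Proposition~\ref{prop:small smkc} supplies once the four-element case is checked separately. The remaining point to confirm is that ``at least four elements'' in Lemos's statement is the same threshold used here, which it is.
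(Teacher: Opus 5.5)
Your proof is correct and follows the paper's route. Proposition~\ref{prop:small smkc}(\romannum{2}) makes $M$ minimally $3$-connected when $|E(M)|\ge 5$, so Lemos's theorem applies, and your direct count of the four triads of $U_{2,4}$ settles the four-element case, which the paper does not spell out.

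Delete your ``alternatively'' sentence, though. $U_{2,3}$ is $3$-connected (it is listed in Table~\ref{table:all sm3c matroids}), so $U_{2,4}$ is \emph{not} minimally $3$-connected; this is exactly why Proposition~\ref{prop:small smkc}(\romannum{2}) requires $|E(M)|>2k-2$. Lemos's theorem therefore cannot be invoked for $U_{2,4}$, and the triad count is what actually carries that case.
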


\end{document}